\documentclass[a4paper, 10pt]{amsart}

\usepackage{defs}
\usepackage{amsaddr}
\usepackage{mathtools}
\usepackage{empheq}

\usepackage{caption}
\usepackage{subcaption}
\usepackage{multicol}
\hypersetup{final=true}

\title[Rate Constrained Maximum Principle]{A discrete-time Pontryagin maximum principle under rate constraints}
\thanks{Siddhartha Ganguly and Souvik Das are supported by the PMRF grant RSPMRF0262, from the Ministry of Human Resource Development, Govt. of India, respectively.\\
We would like to thank Pradyumna Paruchuri and Shruti Kotpalliwar for the helpful discussions and comments.}

\author{Siddhartha Ganguly and Souvik Das and Debasish Chatterjee and  Ravi Banavar} 
\address{
Systems \& Control Engineering\\ IIT Bombay, Powai\\ Mumbai 400076, India\\
\indent URL: \href{https://sites.google.com/view/siddhartha-ganguly}
{\textsf{https://sites.google.com/view/siddhartha-ganguly}}\\
\texttt{Email:sganguly@iitb.ac.in} \\
\indent URL: \href{https://sites.google.com/view/souvikd}
{\textsf{https://sites.google.com/view/souvikd}}\\
\texttt{Email:souvikd@iitb.ac.in}\\
\indent URL: \href{http://www.sc.iitb.ac.in/~chatterjee}
{\textsf{https://www.sc.iitb.ac.in/\textasciitilde chatterjee}}\\
Email: \texttt{dchatter@iitb.ac.in}\\
\indent URL: \href{https://sites.google.com/view/ravibanavar}
{\textsf{https://sites.google.com/view/ravibanavar/}}\\ Email: \texttt{banavar@iitb.ac.in}
}

\date{\DTMnow}

\begin{document}

\maketitle

\begin{abstract}

Limited bandwidth and limited saturation in actuators are practical concerns in control systems. Mathematically, these limitations manifest as constraints being imposed on the control actions, their rates of change, and more generally, the global behavior of their paths. While the problem of actuator saturation has been studied extensively, little attention has been devoted to the problem of actuators having limited bandwidth. While attempts have been made in the direction of incorporating frequency constraints on state-action trajectories before, rate constraints on the control at the design stage have not been studied extensively in the discrete-time regime. This article contributes toward filling this lacuna. In particular, we establish a new discrete-time Pontryagin maximum principle with \emph{rate constraints} being imposed on the control trajectories, and derive first-order necessary conditions for optimality. A brief discussion on the existence of optimal control is included, and numerical examples are provided to illustrate the results.

\end{abstract}


\section{Introduction}\label{sec:intro}

In this article we study a class of discrete-time optimal control problems which include constraints on the rate of the control actions in addition to the conventional state and control constraints. More specifically, constraints of the following forms have been considered here:
    \begin{enumerate}
        \item \label{obj1}state constraints at every time instant,
        \item \label{obj2}constraints on the control at every time instant, and
        \item \label{obj3}constraints on the rate of the control action at every time instant.
    \end{enumerate}
    Constraints of the type (\ref{obj1}) and (\ref{obj2}) above fall within the ambit of safety specifications and actuator saturation respectively. Under the broad umbrella of bandwidth limitations, actuators frequency constraints on the control trajectories have been studied in the discrete-time regime in \cite{ref:PP:DC-19, ref:PP:SK:KP:DC:RB-20, ref:SK:PP:DC-19}, but little attention has been given to the case of input rate constraints (constraint of type (\ref{obj3})). In this article we derive a Pontryagin maximum principle (PMP) based on the formulation proposed in \cite{ref:VGB-75} incorporating the indicated rate constraints.
    
    \par Rate constraints are inherently present in almost every inertial actuator. In the majority of cases, the response of most actuators are complex and nonlinear, and such properties may adversely affect the performance if not considered at the design stage, thereby leading to performance and stability deterioration. Hence these constraints are of great practical importance. Let us consider several concrete examples: (a) Robotic manipulators are driven via torques applied at various links as control input that need to be constrained along with their derivatives to reduce the effect of vibrations due to internal modes. (b) A large electrical grid of networks with several synchronous generators working in tandem in response to control commands need protection from arbitrary variations in the demand and supply to ensure its good health. (c) The abort landing problem \cite{ref:bulir-1991} of an aircraft in the presence of windshear underscores the importance of rate constraints on the control inputs. Aggressive pilot commands, high gain of the flight control system or some irregularity in the flight-system can trigger actuator rate limitations, thus instigating a pilot induced oscillation (PIO) \cite{ref:YY:IVK:DC-11, ref:hess-1997}. (d) Boiler-turbine systems in process industries which are used for tracking purposes of various load commands, are often subjected to actuator magnitude and rate constraints \cite{ref:BR:AKJ-97, ref:CHEN}. In particular, in nuclear reactors one of the major use of a control system is to control the rate of fission by maintaining the temperature of the coolant along the lines of \cite{ref:CHEN}, where the rate of cooling becomes a natural control parameter.
\subsection*{Background}
    Actuator rate constraints for continuous linear time-invariant systems have been studied in \cite{ref:JBM:BDOA-67} and \cite{ref:IEK:FJ-01}. A study on the controllability properties of mechanical systems with rate and amplitude constraints can be found in \cite{ref:VIM:ESP-04}. Global stabilization procedure of linear systems with bounds on the control actions and its successive derivatives upto an arbitrary order has been considered in \cite{ref:JL:AC:YC-16}. In \cite{ref:RF:LP-98} a recursive Lyapunov function based controller for globally stable feedback synthesis with constraints on the inputs and its rates have been introduced. Implementation of rate constraints in the nonlinear programming solver in the context of continuous time optimal control problems have been reported in \cite{ref:YN:ECK-20}.
    
 The PMP in the discrete-time regime for optimal control problems with constraints of the type (\ref{obj1}) and (\ref{obj2}) have been studied extensively; see \cite{ref:VGB-book} for a book-length treatment where the author based the proofs on the so-called ``tent method''. A version of the discrete PMP was derived in \cite{ref:dubovitskii1978discrete} where a different technique known as the Dubovitskii-Milyutin lemma \cite{ref:dubovitskii65} was employed to arrive at the necessary conditions; see \cite{ref:rojas20} for recent applications of the lemma. Over the next three decades, extensions to systems with weaker regularity requirements were developed, including nonsmooth versions of the PMP. More recently, motivated by engineering applications, a discrete-time PMP on matrix Lie groups and on smooth manifolds were established in \cite{ref:KPDCRV-18} and \cite{ ref:kipka2019discrete} respectively, which stimulated the development of discrete-time PMPs for specific engineering applications, including those incorporating frequency constraints \cite{ref:PP:DC-19, ref:SK:PP:DC-19, ref:MPK:DC:RB}. However, \emph{there is no PMP with rate constraints in the literature. 
    The present article is precisely an attempt to bridge this gap, particularly in the discrete-time setting.}
   \subsection*{Our contributions} 
   In the premise of discrete-time optimal control problems for a general class of time-variant nonlinear systems, as a primary contribution of this article, a discrete-time version of the PMP with the three constraints (\ref{obj1}), (\ref{obj2}) and (\ref{obj3}) has been derived. Consequently, we establish first order necessary conditions for optimality. Moreover, a proof of the existence of the optimal solution in the above context under rate constraints and a PMP for the special case of control-affine nonlinear systems has been included.
 \subsection*{Notation}
    We employ the standard notation: $\N \Let \aset{1,2,\ldots}$ denotes the set of positive integers. The vector space $\Rbb^{n}$ is equipped with standard inner product $\inprod{x}{y}\Let x^{\top} y$ for every $x,y \in \Rbb^{n}.$ By $\big(\Rbb^n\big){\dual}$ we mean the dual space of $\Rbb^n$ which is of course isomorphic to the primal vector space $\Rbb^n$ in view of the Riesz representation theorem.

 \section{Problem Formulation}
\subsection{Original problem}\label{sec:original problem} We consider a discrete-time non-autonomous control system given by the recursion 
    \begin{equation}
        \label{eq:system}
        \stt{}{t+1} =  \field \bigl(t,\stt{}{t},\cont{}{t} \bigr) \quad \text{for all $t=\timestamp{0}{T-1}$},
    \end{equation}
    with the following data:
    \begin{enumerate}[label=\textup{(\ref{eq:system}-\alph*)}, leftmargin=*, widest=b, align=left]
		\item \label{eq:system-state}\(\stt{}{t} \in \Rbb^{d}\) is the vector of states at time \(t;\)
		\item \label{eq:system-control}\(\cont{}{t} \in \Rbb^m\) denotes the control action at time \(t;\)
		\item  \label{eq:system-dynamics} for each \(s=\timestamp{0}{T-1}\), the map \(\Rbb^d \times \Rbb^m \ni (\dummyx,\dummyu) \mapsto \field\bigl(s,\dummyx,\dummyu \bigr) \in \Rbb^d\) is continuous, and it describes the dynamics of \eqref{eq:system}.
	\end{enumerate}
    Consider the optimal control problem
\begin{align}
        \label{eq:original problem}
        \begin{aligned}
            &\minimize_{\stt{}{0},(\cont{}{t})_{t=0}^{T-1}}  && \sum_{t=0}^{T-1}\cost\bigl(t,\stt{}{t},\cont{}{t}\bigr) +\cost_F(T,\stt{}{T})\\
            &\hspace{2mm}\sbjto && \begin{cases}
            \text{dynamics\,}\eqref{eq:system},\\
            \stt{}{t} \in \Mbb{t} \hspace{3mm}\text{for \,} t=\timestamp{0}{T},\\
            \cont{}{t} \in \Ubb{t} \hspace{3.6mm}\text{for \,} t=\timestamp{0}{T-1},\\
            \norm{\cont{}{t+1} - \cont{}{t}}  \leqslant \Rcal_t \hspace{3mm}\text{for \,} t=\timestamp{0}{T-2},
            \end{cases}
        \end{aligned}
    \end{align}
with the following data:
		
		
		
		
\begin{itemize}[leftmargin=*, widest=b, align=left]
		\item \label{eq:original problem-time}\(T \in \N\) is a given control horizon;
		
		\item the data \eqref{eq:system-state}-\eqref{eq:system-dynamics} hold;
		
		\item \label{eq:original problem cost} the map \(\Rbb^d \times \Rbb^m \ni (\dummyx,\dummyu) \mapsto \cost\bigl(s,\dummyx,\dummyu\bigr) \in \Rbb\) is a continuous \emph{cost per stage function} at time \(s\) for each \(s = \timestamp{0}{T-1}\), and \(\Rbb^d \ni \xi \mapsto \cost_F(T,\xi)\in \Rbb\) represents the continuous \emph{final stage cost};
		
		\item \label{eq:original problem-Mt} the set  \(\Mbb{t} \subset \Rbb^d\) is the \emph{set of admissible states} with nonempty relative interior at each \(t = \timestamp{0}{T};\) 
		
		\item \label{eq:original problem-Ut}the set \(\Ubb{t} \subset \Rbb^m\) is the \emph{set of admissible actions} with nonempty relative interior at each \(t=\timestamp{0}{T-1};\) 
		
		\item \label{eq:original problem-rate}
		\(\norm{\cont{}{t+1} - \cont{}{t}} \le \Rcal_t\) for each \(t=\timestamp{0}{T-2},\) where \((\Rcal_t)_{t=0}^{T-2}\) is a sequence of pre-specified positive numbers.
			\end{itemize}
The type of constraints considered in the optimal control problem \eqref{eq:original problem} are:
\begin{itemize}
        \item \label{item:cons_state}
        \textit{State Constraints}: We stipulate that the state trajectory \(\bigl(\stt{}{t}\bigr)_{t=0}^{T}\) lies in the tube \(\prod_{t=0}^{T}\Mbb{t} = \Mbb{0} \times \Mbb{1} \times \cdots \times \Mbb{T} \subset \Rbb^{d(T+1)}\).
        \item \label{item:cons_control}\textit{Control Constraints}: The control trajectory \(\bigl(\cont{}{t}\bigr)_{t=0}^{T-1}\) is permitted to take values in the tube \(\prod_{t=0}^{T-1}\Ubb{t} = \Ubb{0} \times \Ubb{1} \times \ldots \times \Ubb{T-1} \subset \Rbb^{mT}\).
        \item \label{item:cons_rate}\textit{Rate Constraints}: Constraints on the \emph{rate of change of the control actions} are captured by
        \(
            \norm{\cont{}{t+1} - \cont{}{t}} \leqslant \Rcal_t \) for all \(t=\timestamp{0}{T-2}\),
        where \(\Rcal_t > 0\) denotes the rate constraint at the time instant \(t\). 
    \end{itemize}

\subsection*{Existence of solutions to \eqref{eq:original problem}}
We provide a result to establish the existence of solutions for the optimal control problem \eqref{eq:original problem}.

\begin{theorem}\label{thrm:existence}
Consider the optimal control problem \eqref{eq:original problem} with its associated data. Define the optimal state-action trajectory pair by \(\Lambda^* \Let \Bigl(\bigl(\stt{\as}{t}\bigr)_{t=0}^{T},\bigl(\cont{\as}{t}\bigr)_{t=0}^{T-1}\Bigr)\). Let \(\mathcal{X}\) be the feasible set corresponding to \eqref{eq:original problem} which is assumed to be nonempty.
Let the following conditions hold:
\begin{enumerate}[label=\textup{(\alph*)}, leftmargin=*, widest=b, align=right]
    \item\label{thrmi1} The state constraint set \(\Mbb{0} \subset \Rbb^d\) and the control constraint sets \(\Ubb{t} \subset \Rbb^m\) for every \(t=0,\ldots,T-1\) are nonempty compact subsets,
    \item\label{thrmi2} the state constraint sets \(\Mbb{t}\) for every \(t = 1,\ldots,T,\) are nonempty closed subsets of \(\Rbb^d\),
    \item\label{thrmi3} the function \(f\bigl(t, \cdot,\cdot \bigr):\Mbb{t}\, \times\, \Ubb{t} \lra \Mbb{t+1}\) is continuous for every \(t = 0, \ldots,T-1\), and
    \item\label{thrmi4} the function \(c \bigl(t,\cdot,\cdot\bigr):\Mbb{t} \times \Ubb{t} \lra \Rbb\) is lower semi-continuous for every \(t = 0, \ldots,T-1,\) and \(\cost_F(T, \cdot):\Mbb{T} \lra \R\) is lower semi-continuous.
\end{enumerate}
Then the problem \eqref{eq:original problem} admits a solution, i.e., there exists optimal trajectories \(\Lambda^*\). 
Alternatively, suppose the following conditions hold:
\begin{enumerate}[label=\textup{(\alph*)},leftmargin=*, widest=b, align=right, start=5]
\item\label{thrmii1} The constrained set \(\Mbb{0}\subset \Rbb^d\) and the control constrained sets \(\Ubb{t}\subset \Rbb^m\) for every \(t=0,\ldots,T-1\) are nonempty closed subsets of \(\Rbb^d\) and \(\Rbb^m\), respectively,
\item \label{thrmii2} the assumptions \eqref{thrmi2}-\eqref{thrmi4} hold,
\item \label{thrmii3} the function \(\cost(0,\cdot,\cdot)\) is weakly coercive, i.e.,
\[
    \begin{aligned}
    \lim_{\|(\dummyx,\dummyu)\| \to +\infty}\cost(0,\dummyx,\dummyu) = +\infty.
    \end{aligned}
\]
\end{enumerate}
Then the problem \eqref{eq:original problem} admits a solution, i.e., there exists optimal trajectories \(\Lambda^{*}\). 
\end{theorem}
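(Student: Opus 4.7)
The plan is to apply the direct method of the calculus of variations, exploiting the fact that the state trajectory is completely determined by the pair $\bigl(\stt{}{0},(\cont{}{t})_{t=0}^{T-1}\bigr)$ through the recursion \eqref{eq:system}. This reduces the search for a minimizer to a finite-dimensional optimization on the parameter space $\Rbb^d\times\Rbb^{mT}$, onto which we transport all constraints and the cost.

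For the hypotheses \ref{thrmi1}--\ref{thrmi4}, I would parametrize the feasible set by $(\stt{}{0},\cont{}{0},\ldots,\cont{}{T-1})$ and observe that $\Mbb{0}\times\prod_{t=0}^{T-1}\Ubb{t}$ is compact by \ref{thrmi1}. The rate constraints $\norm{\cont{}{t+1}-\cont{}{t}}\leqslant\Rcal_t$ cut out closed subsets as preimages of closed intervals under continuous maps. Through iterated composition of the continuous dynamics $f$ (assumption \ref{thrmi3}), each constraint $\stt{}{t}\in\Mbb{t}$ pulls back to a closed subset of the parameter space in view of \ref{thrmi2}. Intersecting these closed sets with the compact ambient set yields a compact feasible set. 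The objective, rewritten as a function on the parameter space, is a sum of lower semicontinuous functions composed with continuous iterated dynamics, hence itself lower semicontinuous by \ref{thrmi4}. Weierstrass' existence theorem then delivers the minimizer.

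For the hypotheses \ref{thrmii1}--\ref{thrmii3}, compactness is lost, so the argument proceeds via a minimizing sequence $\Lambda^n$ of feasible trajectories. The upper bound on the total cost, together with the weak coercivity of $\cost(0,\cdot,\cdot)$ in \ref{thrmii3}, forces the pair $(\stt{n}{0},\cont{n}{0})$ to remain bounded in $\Rbb^d\times\Rbb^m$. The rate constraints then telescope to give
\[
\norm{\cont{n}{t}} \leqslant \norm{\cont{n}{0}} + \sum_{s=0}^{t-1}\Rcal_s \quad \text{for all } t,
\]
so the entire control sequence stays bounded, and the continuity of $f$ in \ref{thrmi3} subsequently bounds the state trajectory on the (finite) horizon. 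Extracting a convergent subsequence along with the closedness of $\Mbb{t}$ and $\Ubb{t}$, and invoking lower semicontinuity of the cost, shows that the limit is feasible and achieves the infimum.

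The main obstacle is the bookkeeping in the second case: one must argue that an upper bound on the total cost actually forces $\cost(0,\stt{n}{0},\cont{n}{0})$ itself to remain bounded above, so that \ref{thrmii3} can be triggered. This requires ruling out the pathological behavior in which the remaining summands $\cost(t,\cdot,\cdot)$ and $\cost_F$ drift to $-\infty$ while $\cost(0,\cdot,\cdot)$ diverges to $+\infty$ along the sequence---a scenario typically excluded by a mild lower-boundedness convention on the stage costs that is standard in the optimal-control literature. Once that is secured, the remaining topological steps (continuity of the iterated dynamics, closedness of constraint sets, passage to lower limits in the cost) are routine.
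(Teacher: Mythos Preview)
For the first assertion your argument matches the paper's almost verbatim: both parametrize feasible trajectories by $\bigl(\stt{}{0},\cont{}{0},\ldots,\cont{}{T-1}\bigr)$, observe that $\Mbb{0}\times\prod_{t}\Ubb{t}$ is compact, pull back the state and rate constraints to closed subsets via continuity of the iterated dynamics, and apply Weierstrass to the resulting compact feasible set with a lower semicontinuous objective.

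For the second assertion the paper takes a different route. Rather than running a minimizing sequence, it argues directly that the sublevel sets $L_{\alpha}(J)=\{\optpair_T : J(\optpair_T)\leqslant\alpha\}$ are compact---closed by lower semicontinuity, bounded because (the paper asserts) $J$ itself is weakly coercive---and then invokes the standard fact that a lsc function with a nonempty compact sublevel set attains its infimum on a closed set. Your approach is more explicit: you take a minimizing sequence, trigger \ref{thrmii3} to bound $(\stt{n}{0},\cont{n}{0})$, and then exploit the rate constraints through the telescope $\norm{\cont{n}{t}}\leqslant\norm{\cont{n}{0}}+\sum_{s<t}\Rcal_s$ to bound the remaining controls before propagating through the dynamics. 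This telescoping step is a genuine addition over the paper, whose second-assertion proof never invokes the rate constraints; and its bare claim that $J$ is weakly coercive in the full vector $\optpair_T$ does not follow from \ref{thrmii3} alone without precisely the kind of lower-boundedness assumption on the remaining stage costs that you flag. The obstacle you identify is therefore real and applies equally to the paper's argument: under the hypotheses as stated, both proofs tacitly require the later stage costs (and $\cost_F$) to be bounded below so that an upper bound on $J$ forces an upper bound on $\cost(0,\stt{n}{0},\cont{n}{0})$.
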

{\it \textbf{Proof}}: We start by giving a proof of the first assertion, which involves the assumptions \eqref{thrmi1}-\eqref{thrmi4}. For convenience we define \(\optpair_t \Let \Bigl(\stt{}{0},\bigl(\cont{}{\tau}\bigr)_{\tau=0}^{t-1}\Bigr) \) where \(\optpair_{0} \Let \stt{}{0}\). Let \(\varphi(t; \cdot): \Rbb^{d} \times \Rbb^{mt} \to \Rbb^d\) denote the solution of the recursion \eqref{eq:system} such that for every \(t=0,\ldots,T\) we have 
\begin{equation*}
    \begin{aligned}
     \begin{cases}
     \varphi\bigl(t; \optpair_{t} \bigr)=\stt{}{t}  = f 
    \Bigl( t-1,  f \bigl( \cdots f\bigl(0,x(0),u(0)\bigl),u(1) \bigr),u(2) \bigr),\cdots\bigr),u(t-1) \Bigr),\\
    \varphi(0;\stt{}{0})=\stt{}{0} .
    \end{cases}
    \end{aligned}
\end{equation*}
Since, the function \(f(t,\cdot,\cdot)\) is continuous for every \(t=0,\ldots,T-1\), continuity of \(\varphi \bigl(t;\cdot \bigr)\) follows. This implies that the set \(\bigcap_{t=0}^T\aset[\big]{\optpair_t \suchthat \stt{}{t} \in \Mbb{t}}\), or equivalently the set
\begin{equation*}
   \scal \Let \bigcap_{t=0}^{T}\varphi\bigl(t; \cdot \bigr)^{-1} \bigl( \Mbb{t}\bigr),
\end{equation*}
is closed and nonempty. Let us define \(\wcal \Let \Mbb{0} \times \Ubb{0} \times \cdots \times \Ubb{T-1}.\) Then the feasible set can be written as 
\begin{equation*}
    \mathcal{X} \Let \wcal \cap \scal \cap \RCal,
\end{equation*}
where \(\RCal \Let \bigcap_{t=0}^{T}\aset[\big]{ \optpair_t \suchthat \stt{}{0} \in  \Mbb{0},\, \norm{\cont{}{t} - \cont{}{t-1}}\l \leqslant \Rcal_t}\). Observe that the set \(\mathcal{X} \subset \wcal\) is nonempty and compact, and thus invoking \cite[Theorem 2.2]{ref:OG10}, the first assertion follows.

Now we prove the second assertion that involves the conditions  \eqref{thrmii1}-\eqref{thrmii3}. We denote the cost function by
\begin{equation}
    J(\optpair_{T}) \Let \sum_{t=0}^{T-1} c(t,\stt{}{t},\cont{}{t}) + \cost_F(T,\stt{}{T}). 
\end{equation}
Let us define the sublevel sets of \(J\), by
\(
L_{\alpha}(J) \Let \aset[\big]{\optpair_T  \suchthat J(\optpair_T) \leqslant \alpha} 
\) for every \(\alpha \in \Rbb\). Observe that
\(L_{\alpha}(J)\) is closed since for all \(t=0,\ldots,T-1\), \(\cost(t,\cdot,\cdot)\) and \(\cost_{F}(T,\cdot)\) are lower semi-continuous, and \(L_{\alpha}(J)\) is bounded since \(J\) is weakly coercive. This implies that the sublevel set \(L_{\alpha}(J)\) is nonempty and compact. Invoking \cite[Theorem 2.3]{ref:OG10} immediately gives us the  second assertion. \hfill\(\qed\)


\begin{remark}
Observe that in Theorem \eqref{thrm:existence} it is possible to replace the condition \eqref{thrmii3} by the weaker condition \cite{ref:allaire2007numerical}:
\begin{equation}\label{eq:alter_cond}
    \inf_{\optpair_T \in \mathcal{X}}J(\optpair_T) < \lim_{R \to +\infty} \Bigl( \inf_{\norm{\optpair_T} \ge R}J(\optpair_T)\Bigr) < +\infty, \nn
\end{equation}
and then the assertion of Theorem \eqref{thrm:existence} still holds. 
\end{remark}

\begin{remark}
A general existence theorem for state-action constrained finite horizon discrete-time optimal control problem can be found in \cite{ref:JD-76}, where the state-action constraint sets are assumed to be Hausdorff topological spaces. For an infinite horizon case, we refer the readers to \cite{ref:SK:EG-83}.
\end{remark}

\begin{remark}
\label{rem:optimizers_1}
A translation of the constraints on the rate of the control actions into equivalent constraints on the control actions and/or states of the original system, in general, is not possible. The standard PMP in \cite{ref:VGB-75} cannot, therefore, be applied directly to the problem \eqref{eq:original problem}. There are at least two routes to attack the problem \eqref{eq:original problem}: One is to absorb the rates as the new \emph{action} variables and to derive a new discrete-time PMP. It has been treated extensively in \cite{ref:SG:SD:DC:RB-21}. The second approach is to absorb the rates as the new \emph{state} variables and to transform the original problem into a (possibly equivalent) new optimal control problem so that we can directly apply the standard PMP to the new one. We emphasize that the first approach does not provide us with first-order necessary conditions in the original state and control variables although it does produce a viable numerical scheme for synthesizing rate constrained optimal action trajectories. We propose an alternative formulation here in which the information about the rate constraints are captured by a sequence of extended states; see \S\ref{subsec:transformed problem} ahead. This procedure has the added advantage of giving us a rate constrained maximum principle directly with the original state and action variables.
    \end{remark}
\subsection{Transformed problem}\label{subsec:transformed problem}
The original problem presented in \S\eqref{sec:original problem} and all its associated data remain intact. We define a new set of variables to transcribe the rate constraints on the successive actions variables as new states, thus lifting the original problem to a higher dimensional state-space. To wit,  we define the sequence of new states \(\extst{t}{k} \in \Rbb^m\) where \(t = \timestamp{0}{T}\) and \(k = \timestamp{0}{T-2}\):
\begin{equation}
\label{eq:sys_new_st} 
    \begin{aligned}
        \begin{cases}
            \extst {0}{k}=0,\, \extst{1}{k}=\extst{0}{k},\ldots,\,\extst{k}{k}=\extst{k-1}{k};\\
            \extst{k+1}{k}=-\cont{}{k};\\
            \extst{k+2}{k}=\extst{k+1}{k}+\cont{}{k+1};\\
            \extst{k+3}{k}=\extst{k+2}{k},\, \extst{k+4}{k}=\extst{k+3}{k},\ldots,\, \extst{T}{k}=\extst{T-1}{k}.
        \end{cases}
    \end{aligned}
\end{equation}
The variables \(\extst{t}{k}\) captures the essence of the rate constraints at the \(k^{\text{th}}\) instant. For a fixed \(k \in \{0,\ldots,T-2\}\), we stipulate that \(\extst{t}{k} \in \ysetin{t}{k}\), where \(\ysetin{t}{k}\) brings in both the control and the rate constraints as:
 \begin{equation}
\begin{aligned}
\ysetin{t}{k} \Let\begin{cases}
  \{0\}~&\text{for}~t=\timestamp{0}{k},\\
\Ubb{t}~&\text{for} ~t=k+1,\\
\aset[\big]{\extst{t}{k} \in \Rbb^m \suchthat \|\extst{t}{k}\| \leqslant \Rcal_k} &\text{for}~t=\timestamp{k+2}{T}.
\end{cases}
\end{aligned}
\end{equation} 
To simplify our notation, we define set \begin{equation*}
    \ysetin{k+2}{k}\Let \aset[\big]{ \extst{k+2}{k} \in \Rbb^m \suchthat \|\extst{k+2}{k}\| \leqslant \Rcal_{k}}, \quad k \in \aset[]{0,\dots,T-2};
\end{equation*} 
note that \(\ysetin{k+2}{k}\) consists of the data of the \(k^{\text{th}}\) instant rate. Then, with this notation in place we have \( \ysetin{t}{k}=\ysetin{k+2}{k}\) for all \(t \in \{k+2,\ldots,T\}.\) Condensing the preceding definitions, we observe that the vector field corresponding to the non-autonomous dynamics describing the new set of states is given by
\begin{equation}
\label{eq:sys_rate_dyn} 
    \begin{aligned}
        \extst{t+1}{k}= \gt{k} \bigl(t,\extst{t}{k},\cont{}{t}\bigr)\Let\begin{cases}
        \extst{t}{k}~&\text{for all}~t \neq k,k+1,\\
        -\cont{}{t}~&\text{for}~t=k,\\
        \extst{t}{k}+\cont{}{t} &\text{for}~t=k+1,
        \end{cases}
    \end{aligned}
\end{equation}
where \( \bigl(\extst{t}{k}\bigr)_{t=0}^T \subset \Rbb^m\) for each \(k\). Thus, the original problem \eqref{eq:original problem} can now be written as
 \begin{align}
        \label{eq:transformed problem 1}
        \begin{aligned}
            &\minimize_{\substack{\stt{}{0},(\cont{}{t})_{t=0}^{T-1}}}  && \sum_{t=0}^{T-1}\cost\bigl(t,\stt{}{t},\cont{}{t}\bigr) +\cost_F(T,\stt{}{T})\\
            &\hspace{2mm}\sbjto && \begin{cases}
           \text{dynamics\,}\eqref{eq:system},\\
            \text{dynamics\,}\eqref{eq:sys_rate_dyn},\\
            \stt{}{t} \in \Mbb{t} \hspace{3mm}\text{for all \(t=\timestamp{0}{T}\)},\\
            \cont{}{t} \in \Ubb{t} \hspace{3.5mm}\text{for all \(t=\timestamp{0}{T-1}\)},\\
            \extst{t}{k} \in \ysetin{t}{k} \hspace{3mm}\text{for all}\,\,t=\timestamp{0}{T},\,\&\,\,k=0,\ldots,T-2,
            \end{cases}
        \end{aligned}
    \end{align}
which is in the standard form, as reported in \cite{ref:VGB-75}. To apply \cite[Theorem 20]{ref:VGB-75}, we define an extended set of states \(w\) and vector field \(\mathcal{F}\) concatenating the original state vectors \(\stt{}{t}\) along with the new state vectors \(\bigl(\extst{t}{k}\bigr)_{k=0}^{T-2}\) for each \(t=0,\ldots,T\), and the original vector field \((\dummyx,\dummyu)\mapsto\field\bigl(s,\dummyx,\dummyu\bigr)\) along with the vector field associated with the new state dynamics \eqref{eq:sys_rate_dyn}, by
\begin{equation}
\label{eq:w_t and F_t}
\wtt{}{t} \Let \begin{pmatrix}
     \stt{}{t}\\ \extst{t}{0} \\ \extst{t}{1}\\ \vdots\\ \extst{t}{T-2} 
\end{pmatrix} \quad\text{and}\quad
  \exdyn {\wtt{}{t}} {\cont{}{t}} \Let \begin{pmatrix}
       \field \bigl(t,\stt{}{t},\cont{}{t}\bigr) \\ \gt{0} \big( t,\extst{t}{0},\cont{}{t} \big)\\ \gt{1}\big( t,\extst{t}{1},\cont{}{t} \big)\\ \vdots \\ \gt{T-2}\big ( t,\extst{t}{T-2},\cont{}{t} \big) 
  \end{pmatrix}.
\end{equation}
The extended state vector \( \wtt{}{t} \) resides in \(\Rbb^{q}\) for every \(t = 0,\ldots,T\), where \(q \Let d+(T-1)m\), and the vector field \((\dummyw,\dummyu) \mapsto \mathcal{F}\bigl(s,\dummyw,\dummyu\bigr)\) denotes the extended set of dynamics for every \(s = 0,\ldots,T-1\). The optimal control problem \eqref{eq:transformed problem 1} is now transformed into
\begin{align}
        \label{eq:transformed problem 2}
        \begin{aligned}
            &\minimize_{\substack{\wtt{}{0},(\cont{}{t})_{t=0}^{T-1}}}  && \sum_{t=0}^{T-1}\cost\bigl(t,\stt{}{t},\cont{}{t}\bigr)+\cost_F(T,\stt{}{T})\\
            &\hspace{2mm}\sbjto && \begin{cases}
            \wtt{}{t+1}=\exdyn{\wtt{}{t}}{\cont{}{t}}\hspace{3mm}\text{for all}\,\,t=\timestamp{0}{T-1},\\
            \wtt{}{t} \in \wset{t} \hspace{4mm}\text{for all}\,\,t=\timestamp{0}{T},\\\cont{}{t} \in \Ubb{t} \hspace{5.5mm}\text{for all}\, \, t=\timestamp{0}{T-1},
            \end{cases}
        \end{aligned}
    \end{align}
    where the constraint set for the augmented state is defined as \[\wset{t} \Let \Mbb{t} \times \ysetin{t}{0} \times \ysetin{t}{1} \times \cdots \times \ysetin{t}{T-2}.\] Observe that the transformed problem \eqref{eq:transformed problem 2} has been lifted to a higher dimensional space where $\bigl(\wtt{}{t}\bigr)_{t=0}^{T} \in \Rbb^{q(T+1)}$ and $\bigl(\cont{}{t}\bigr)_{t=0}^{T-1} \in \Rbb^{mT}$. Moreover, the constraints in \eqref{eq:original problem} have been transformed into an equivalent set of state constraints in \eqref{eq:transformed problem 2} and the latter is in the standard form given in \cite{ref:VGB-75}. 
\subsection{Equivalence of the original and the transformed problems}\label{subsec:transformed prob}
Let us demonstrate that the original problem \eqref{eq:original problem} is equivalent to the transformed problem \eqref{eq:transformed problem 1} in view of following technical lemma \ref{lem:Equivalence}, the idea of which has been borrowed from \cite{ref:DM11}. This in turn, will establish the equivalence between \eqref{eq:original problem} and \eqref{eq:transformed problem 2}. To that end, we have the following lemma. 
\begin{lemma}\label{lem:Equivalence}
         Consider the following two optimization problems:
        \begin{align*}
            \begin{aligned}
                \p1: &\minimize_{x}  && f_1(x)\\
                &\sbjto && x \in \Sbb_1
           \end{aligned}
           & \qquad\text{and}\qquad
            \begin{aligned}
                \p2:& \minimize_{y}  && f_2(y)\\
                &\sbjto && y \in \Sbb_2 
           \end{aligned}
        \end{align*}
        where $\Sbb_1$ and $\Sbb_2$ denotes the feasible set corresponding to $\p1$ and $\p2$. Suppose that the objective functions $f_1(\cdot)$ and $f_2(\cdot)$ are continuous such that $\compose {f_2} F_{12}=f_1$ and $\compose {f_1} F_{21} = f_2$. Assume that a feasible point in $\p1 \,(\mathrm{or}\, \p2)$ is mapped to a feasible point in $\p2\,(\mathrm{or} \, \p1)$ under $F_{12}\,(\mathrm{or}\, F_{21})$. Under these hypothesis:
        \begin{enumerate}
            \item If $x \as$ is optimal for $\p1$, then $F_{12}(x \as)$ is optimal for $\p2$.
            \item If $y \as$ is optimal for $\p2$, then $F_{21}(y \as)$ is optimal for $\p1$.
        \end{enumerate}
    \end{lemma}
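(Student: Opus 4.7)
The plan is to prove both statements by contradiction, using the two compositional identities $f_2 \circ F_{12} = f_1$ and $f_1 \circ F_{21} = f_2$ together with the hypothesis that $F_{12}$ and $F_{21}$ send feasible points to feasible points. Once the setup is in place, each assertion follows from a short three-line argument, so I expect no substantial obstacles.

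For assertion (1), I would let $x^* \in \Sbb_1$ be optimal for $\p1$, so $F_{12}(x^*) \in \Sbb_2$ by hypothesis, and by the identity $f_2 \circ F_{12} = f_1$ we have $f_2\bigl(F_{12}(x^*)\bigr) = f_1(x^*)$. Suppose toward contradiction that $F_{12}(x^*)$ is not optimal for $\p2$, i.e.\ there exists $\tilde y \in \Sbb_2$ with $f_2(\tilde y) < f_2\bigl(F_{12}(x^*)\bigr)$. Then $F_{21}(\tilde y) \in \Sbb_1$ by hypothesis, and using $f_1 \circ F_{21} = f_2$ we get $f_1\bigl(F_{21}(\tilde y)\bigr) = f_2(\tilde y) < f_2\bigl(F_{12}(x^*)\bigr) = f_1(x^*)$, which contradicts the optimality of $x^*$ for $\p1$.

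Assertion (2) follows by a symmetric argument, swapping the roles of $\p1$ and $\p2$ and of $F_{12}$ and $F_{21}$. I would simply state this parallel argument rather than write it out in full, since all the steps are verbatim analogues of those in assertion (1).

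The only mildly subtle point is to note that continuity of $f_1$ and $f_2$ is not actually invoked anywhere in the argument as I have laid it out; the identities and the feasibility-preservation property are doing all the work. I would remark on this briefly, since the reader may wonder why continuity appears in the hypotheses — presumably it is assumed for downstream use (e.g.\ together with the existence theorem from Theorem~\ref{thrm:existence}) to guarantee that optimizers exist in the first place when the lemma is later applied to relate problems \eqref{eq:original problem} and \eqref{eq:transformed problem 1}.
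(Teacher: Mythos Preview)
Your proof is correct and follows essentially the same approach as the paper's own proof: assume optimality of $x^*$, suppose $F_{12}(x^*)$ is not optimal so some $\tilde y$ beats it, then pull $\tilde y$ back via $F_{21}$ to contradict optimality of $x^*$, and handle the second assertion by symmetry. Your additional observation that continuity is not actually used in the argument is accurate and goes slightly beyond what the paper records.
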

    \begin{proof}
        We start by assuming that $x \as$ is optimal for $\p1$. If $F_{12}(x \as)$ is not optimal in $\p2$, then there exists a feasible $\tilde{y}$ for which $f_2(\tilde{y})< f_2\big(F_{12}(x \as) \big).$ But then $F_{21}(\tilde{y})$ is feasible for $\p1$, and by assumption
        \begin{equation*}
            f_1\big(F_{21}(\tilde{y})\big)=f_2(\tilde{y}) < f_2 \big( F_{12}(x \as)\big)=f_1(x \as),
        \end{equation*}
        which contradicts optimality of $x \as$ in $\p1$. The converse directions admits an identical proof.  
    \end{proof}
    
\begin{remark}
Lemma \eqref{lem:Equivalence} can be parsed in terms of Figure \ref{fig:eqvt_diag}. The assertions in Lemma \ref{lem:Equivalence} follows if Figure \ref{fig:eqvt_diag} commutes. 
Note that $\mathcal{X}_1$, $\mathcal{X}_2$ can be manifolds of different dimensions.
\end{remark}
\begin{figure}[h]
\centering
\includegraphics[width=7cm,height=4cm]{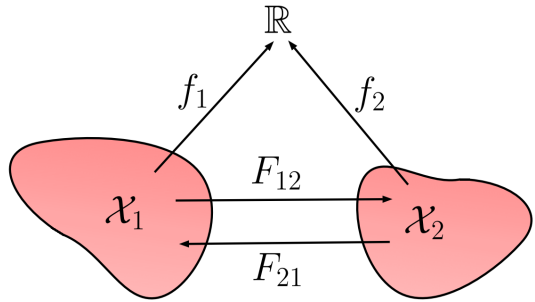}
\caption{Commutative diagram showing the equivalence between the optimization problems, \p1 and \p2.}
\label{fig:eqvt_diag}
\end{figure}
\noindent We establish an equivalence between \eqref{eq:original problem} and \eqref{eq:transformed problem 1}. We observe that for each \(k \in \{0,\ldots,T-2\}\), the equation \eqref{eq:sys_new_st} can be written in the matrix form
    \begin{equation*}
        \begin{pmatrix}
        I_m & 0_m & 0_m & \cdots & \cdots & \cdots & \cdots & \cdots & \cdots & 0_m\\
       -I_m & I_m & 0_m & 0_m & \cdots & \cdots  & \cdots & \cdots & \cdots & 0_m\\
        0_m & -I_m & I_m & 0_m & 0_m & \cdots  & \cdots & \cdots & \cdots& 0_m\\
        0_m & 0_m & -I_m & I_m & 0_m &  \cdots & \cdots & \cdots & \cdots& 0_m\\
        \vdots & \vdots & \ddots & \ddots & \ddots & \ddots & \ddots & \vdots & \vdots & \vdots\\
        0_m & \cdots & \cdots & 0_m & -I_m & I_m & 0_m & \cdots & \cdots & 0_m \\
        0_m & 0_m & \cdots & \cdots & \cdots & 0_m & I_m & 0_m & \cdots & 0_m\\
        0_m & 0_m & 0_m & \cdots & \cdots & \cdots & -I_m & I_m & \cdots & 0_m\\
        \vdots & \vdots & \vdots& \vdots & \vdots & \vdots &  \ddots & \ddots & \ddots & \vdots
      \\ 
       0_m & 0_m & 0_m & \cdots & \cdots & \cdots & \cdots & 0_m & -I_m & I_m
      \end{pmatrix}
      \begin{pmatrix}
           \extst{0}{k}\\\extst{1}{k}\\ \extst{2}{k} \\ \extst{3}{k} \\ \vdots\\ \extst{k}{k}\\ \extst{k+1}{k} \\ \extst{k+2}{k} \\  \vdots \\ \extst{T}{k}
      \end{pmatrix}=\begin{pmatrix}
       0_m \\ 0_m \\ 0_m \\ 0_m \\ \vdots \\ 0_m \\ -\cont{}{k} \\ \cont{}{k+1} \\ \vdots \\ 0_m    
      \end{pmatrix},
  \end{equation*}
  which we condense and write as \( A_k \rst_k = u_k\). The column vectors \(\rst_k\) and \(u_k\) reside in \(\Rbb^{m(T+1)} \). Observe that the matrix \(A_k\) is invertible for each \(k \in \aset[]{0,\ldots,T-2}\), which implies that the transformation matrix \(A \Let \mathrm{diag}\bigl(A_0,A_1,A_2, \ldots,A_k,\ldots, A_{T-2}\bigr)\) is in fact an invertible matrix. In view of Lemma \eqref{lem:Equivalence} if we define \( F_{12} \Let A\) and \(F_{21} \Let A^{\inverse}\), then \(F_{12} \circ F_{21} \) and \(F_{21}\circ F_{12}\) are the identity maps on \(\Rbb^{m(T+1)}\). This justifies the transformation done in \eqref{eq:sys_new_st} to show that the problems \eqref{eq:original problem} and \eqref{eq:transformed problem 1} are equivalent in the sense that if \(\Bigl( \bigl(\stt{\as}{t}\bigr)_{t=0}^{T},\bigl(\cont{\as}{t}\bigr)_{t=0}^{T-1}\Bigr)\) is an optimal trajectory for \eqref{eq:original problem}, then \(\Bigl( \bigl(\stt{\as}{t}\bigr)_{t=0}^{T},\bigl(y\as_k(t)\bigr)_{t=0}^{T},\bigl(\cont{\as}{t}\bigr)_{t=0}^{T-1}\Bigr)\) where \(k \in \aset[\big]{0,\ldots,T-2}\), is also an optimal trajectory for \eqref{eq:transformed problem 1}, and vice-versa.

\section{Main results}
\label{sec:main_result}
\begin{assumption}\label{assm:Assumption1}
    We stipulate the following
    \begin{itemize}
        \item The map \(\Rbb^d \times \Rbb^m \ni (\dummyx,\dummyu) \mapsto \field\bigl(s,\dummyx,\dummyu \bigr) \in \Rbb^d\) is continuously differentiable;
        \item the cost per stage function \(\Rbb^d \times \Rbb^m \ni (\dummyx,\dummyu) \mapsto \cost\bigl(s,\dummyx,\dummyu\bigr) \in \Rbb\) is continuously differentiable, and 
        \item the set \(\Mbb{t} \subset \Rbb^d\) is a nonempty and closed set for every \(t=0,\ldots,T\).
    \end{itemize}
\end{assumption}
\noindent The following theorem provides a set of first order necessary conditions for the optimal control problem \eqref{eq:original problem}, which is the main result of the article.
\begin{theorem}{\big(Rate constrained discrete-time PMP\big).}\label{rcpmp-p1}
Consider the optimal control problem \eqref{eq:original problem} with its associated data and suppose that \(\Bigl(\bigl(\stt{\as}{t}\bigr)_{t=0}^T,\bigl(\cont{\as}{t}\bigr)_{t=0}^{T-1}\Bigr)\) is an optimal state-action trajectory. Let assumption \ref{assm:Assumption1} hold. Define the Hamiltonian
\begin{equation}
    \label{eq:rcpmp_Ham-p1}
    \begin{aligned}
        \Rbb \times \Nz \times \bigl( \Rbb^d \bigr)\dual \times \bigl( \Rbb^{m(T-1)} \bigr)\dual \times \Rbb^d \times &\,\Rbb^m \ni \bigl( \psi_0,s,\psi,\lambda,\dummyx,\dummyu \bigr)\mapsto \\ H^{\psi_0}\bigl(s,\psi,\lambda,\dummyx,\dummyu \bigr) \Let  & \,\inprod{\psi}{f\bigl(s,\dummyx,\dummyu \bigr)}+\inprod{\lambda_{s-1}-\lambda_s}{\dummyu} - \psi_0 c \bigl(s,\dummyx,\dummyu \bigr),
    \end{aligned}
\end{equation}
where the sequence of vectors \(\lambda \Let \aset[]{\lambda_0,\ldots,\lambda_{T-2}} \subset \bigl(\Rbb^{m(T-1)}\bigr)\dual\). Then there exist 
\begin{enumerate}[leftmargin=*, widest=b, align=left]
    \item trajectories \(\bigl( \etaf(t) \bigr)_{t=0}^{T-1} \subset \bigl(\Rbb^{d}\bigr)^{\star}\) and \(( \etax(t)\bigr)_{t=0}^T \subset \bigl(\Rbb^d\bigr)^{\star}\),
    
    \item trajectories \( \etag{k}: \aset[]{0,\ldots,T-1} \lra (\Rbb^{m})^{\star}\) and the sequence \(\etay{k}:\aset[]{0,\ldots,T} \lra (\Rbb^m)^{\star}\) such that \(\etag{k}(T-1) =\etay{k}(T)\) and for all \(k=\timestamp{0}{T-2}\),  
\begin{equation}
\begin{aligned}
 \etag{k}(t-1) =\begin{cases}
  \etay{k}(t)~&~\text{for}~t=k,\\
\etag{k}(t)+\etay{k}(t)~&~\text{for all} ~t\neq k, \nn
\end{cases}
\end{aligned}
\end{equation}
\item $\abnormal \in \Rbb$,
\end{enumerate}
satisfying the following conditions 
	\begin{enumerate}[label=\textup{(\ref{rcpmp-p1}-\alph*)}, leftmargin=*, widest=b, align=left]
		\item \label{rcpmp:nonneg-p1} non-negativity condition:  $\abnormal \geqslant 0$;
		
		\item \label{rcpmp:non-triv-p1} nontriviality condition: the tuple \( \Bigl(\abnormal, \bigl( \etaf(t) \bigr)_{t=0}^{T-1}\Bigr)\) do not vanish simultaneously;
		
		\item \label{rcpmp:state-p1}  the system state dynamics:
			\begin{equation*}
				\begin{aligned}
				\stt{\as}{t+1} &=\frac{\partial }{\partial \psi}H^{\abnormal} \big(t,\etaf(t),\etag{0}(t),\ldots,\etag{T-2}(t),\stt{\as}{t}, \cont{\as}{t}  \big) \quad \text{for all}\,\, t=\timestamp{0}{T-1}\\
				\end{aligned}
			\end{equation*}
			
		\item \label{rcpmp:adj-p1}
		the system adjoint dynamics:
			\begin{equation*}
				\begin{aligned}
				\etaf(t-1)&=\frac{\partial }{\partial \dummyx}H^{\abnormal}\big(t,\etaf(t),\etag{0}(t),\ldots,\etag{T-2}(t),\stt{\as}{t},\cont{\as}{t}\big)\nn \\& \hspace{40mm}+\etax(t) \quad \text{for all}\,\, t=\timestamp{0}{T-1},
				\end{aligned}
			\end{equation*}
			
		\item \label{rcpmp:trv-p1} transversality conditions:
			\begin{equation*}
			    \begin{aligned}
			    \begin{cases}
			    \frac{\partial}{\partial \dummyx}H^{\abnormal}\big(0,\etaf(0),\etag{0}(0),\ldots,\etag{T-2}(0),\stt{\as}{0},\cont{\as}{0} \big)+\etax(0)=0, \\
			     \etaf(-1)=0,\quad \etax(T)-\etax(T-1)=0,\\ H^{\abnormal}\big(T,\etaf(T),\etag{0}(T),\ldots,\etag{T-2}(T),\stt{\as}{T},\cont{\as}{T}\big)=0,\end{cases}
			    \end{aligned}
			\end{equation*}
			
		\item \label{rcpmp:hmc-p1} the Hamiltonian maximization condition, pointwise in time: for every time\\ \(t=\timestamp{0}{T-1},\)
		\begin{equation}
		\begin{aligned}
			\Bigg\langle \frac{\partial}{\partial \dummyu}\biggl(\inprod{\etaf(t)}{\field\bigl(t,\stt{\as}{t},\cont{\as}{t} \bigr)}&+\inprod{\etag{t-1}(t)-\etag{t}(t)}{\cont{\as}{t}}\\& - \abnormal \cost \bigl(t,\stt{\as}{t},\cont{\as}{t} \bigr)\biggr),\tilde{u}(t) \Bigg \rangle \leqslant 0, 
			\end{aligned}
			\end{equation}
			whenever \(\cont{\as}{t}+\tilde{u}(t) \in U(t)\), where \(U(t)\) is a given local tent of \(\Ubb{t}\) at the point \(\cont{\as}{t}\).
	\end{enumerate}       
\end{theorem}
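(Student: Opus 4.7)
My plan is to apply the standard discrete-time maximum principle of Boltyanskii \cite[Theorem 20]{ref:VGB-75} to the lifted problem \eqref{eq:transformed problem 2}, read off the necessary conditions block-by-block, and then transport them back to the original problem \eqref{eq:original problem} through the equivalence in Lemma \ref{lem:Equivalence}. Because \eqref{eq:transformed problem 2} is already in standard Boltyanskii form, with augmented state $w(t)=(x(t),y_0(t),\ldots,y_{T-2}(t))\in\mathbb{R}^q$, dynamics $\mathcal{F}$ given by \eqref{eq:w_t and F_t}, state constraints $w(t)\in W_t$ and control constraints $u(t)\in U(t)$, and because the cost depends only on the $(x,u)$ coordinates, Assumption \ref{assm:Assumption1} transfers verbatim.

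Next I would decompose the outputs of Boltyanskii's theorem according to the block structure of $w$. Writing the lifted costate as $p(t)=(\eta_f(t),\eta_{g_0}(t),\ldots,\eta_{g_{T-2}}(t))$ and the lifted state-constraint multipliers as $\mu(t)=(\eta_x(t),\eta_{y_0}(t),\ldots,\eta_{y_{T-2}}(t))$, the block-diagonal form of $\partial\mathcal{F}/\partial w$ makes the adjoint system decouple: the $x$-block reproduces \ref{rcpmp:state-p1}--\ref{rcpmp:adj-p1} for $\eta_f$ with multiplier $\eta_x$, while each $y_k$-block reads $\eta_{g_k}(t-1)=(\partial g^k/\partial y_k)^\top\eta_{g_k}(t)+\eta_{y_k}(t)$. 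Reading $\partial g^k/\partial y_k$ off \eqref{eq:sys_rate_dyn} -- the identity whenever $t\neq k$ and zero at $t=k$ -- collapses this recursion to the piecewise identity asserted for $\eta_{g_k}$, and the natural terminal condition $\eta_{g_k}(T)=0$ (no terminal cost is charged on $y_k$) specialises to $\eta_{g_k}(T-1)=\eta_{y_k}(T)$.

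For the Hamiltonian maximisation condition \ref{rcpmp:hmc-p1} I would write the full lifted Hamiltonian
\[
H_{\mathrm{lift}}=\langle\eta_f(t),f(t,x,u)\rangle+\sum_{k=0}^{T-2}\bigl\langle\eta_{g_k}(t),\,g^k(t,y_k,u)\bigr\rangle-\psi_0\,c(t,x,u),
\]
and take its partial with respect to $u$ at time $t$. By \eqref{eq:sys_rate_dyn} only the blocks $k=t$ (contributing $-u(t)$) and $k=t-1$ (contributing $+u(t)$) depend on $u(t)$, so $\partial_u\sum_k\langle\eta_{g_k}(t),g^k\rangle=\eta_{g_{t-1}}(t)-\eta_{g_t}(t)$; combining this with the $u$-partials of the $f$ and $c$ terms yields exactly \ref{rcpmp:hmc-p1}. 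The condensed Hamiltonian \eqref{eq:rcpmp_Ham-p1} is then obtained by relabelling $\lambda_k\leftrightarrow\eta_{g_k}(\cdot)$ at the time instants where $\eta_{g_k}$ appears.

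The transversality relations \ref{rcpmp:trv-p1} and the non-negativity \ref{rcpmp:nonneg-p1} are read off Boltyanskii's theorem directly, using that $y_k(0)=0$ is pinned while $x(0)\in M(0)$ is otherwise free, and that no terminal cost is charged on the $y_k$ components. The step I expect to require the most care is the non-triviality \ref{rcpmp:non-triv-p1}: Boltyanskii's theorem yields non-triviality of the full tuple $(\psi_0,\eta_f(\cdot),\eta_{g_0}(\cdot),\ldots,\eta_{g_{T-2}}(\cdot))$, whereas the theorem asserts non-triviality of the smaller tuple $(\psi_0,\eta_f(\cdot))$ alone. Closing this gap will exploit the fact that the $\eta_{g_k}$-recursions are driven purely by the slacks $\eta_{y_k}$ and are decoupled from the $\eta_f$-block; by inspecting the constrained instants $t=k+1$ and $t\in\{k+2,\ldots,T\}$, where $Y_{t,k}$ has nonempty relative interior, one can absorb any degenerate $\eta_{g_k}$-component into the $\eta_{y_k}$ slack and reduce the non-triviality to the smaller tuple. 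Finally, Lemma \ref{lem:Equivalence} transports these conclusions back to the original problem \eqref{eq:original problem}.
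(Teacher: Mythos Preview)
Your approach is essentially identical to the paper's: apply Boltyanskii's standard PMP \cite[Theorem 20]{ref:VGB-75} to the lifted problem \eqref{eq:transformed problem 2}, split the lifted costate and constraint multipliers block-by-block as $\eta_{\mathcal{F}}=(\eta_f,\eta_{g_0},\ldots,\eta_{g_{T-2}})$ and $\eta_w=(\eta_x,\eta_{y_0},\ldots,\eta_{y_{T-2}})$, read off the $\eta_{g_k}$-recursion from $\partial g^k/\partial y_k$, and collapse the $u$-partial of the lifted Hamiltonian using the fact that only the $k=t$ and $k=t-1$ blocks of \eqref{eq:sys_rate_dyn} depend on $u(t)$. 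Your caution about \ref{rcpmp:non-triv-p1} is well placed: the paper simply asserts that the reduced non-triviality ``can be obtained'' from the full one without supplying the absorption argument you sketch, so on that point you are in fact more careful than the paper itself.
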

Observe that the necessary conditions in Theorem \eqref{rcpmp-p1} are in terms of the original variables, which is an advancement with respect to the results reported in \cite{ref:SG:SD:DC:RB-21}. We provide a complete proof of the theorem \eqref{rcpmp-p1} in Appendix \eqref{Appen:B}. 
\begin{remark}
A quintuple \(\Bigl(\abnormal,\bigl( \etaf(t) \bigr)_{t=0}^{T-1},\bigl(\etag{k} \bigr)_{t=0}^{T-1},\bigl(\stt{\as}{t}\bigr)_{t=0}^T,\bigl(\cont{\as}{t} \bigr)_{t=0}^{T-1}\Bigr)\) for all \(k=0,\ldots,T-2\), that satisfies the PMP in Theorem \ref{rcpmp-p1} is called an extremal lift of the optimal state-action trajectory, i.e., the tuple \(\Bigl(  \bigl(\stt{\as}{t}\bigr)_{t=0}^T,\bigl(\cont{\as}{t} \bigr)_{t=0}^{T-1} \Bigr)\). Extremal lifts with \(\abnormal=1\) are called normal extremals and the ones with \(\abnormal = 0\) are called abnormal extremals.
\end{remark}
\begin{remark}\label{rem:crucial}
Observe that the Hamiltonian maximization condition \ref{rcpmp:hmc-p1} in Theorem \ref{rcpmp-p1} does not induce an \emph{exact} maximum principle (see \cite[Chapter 6]{ref:BSM:06}) in the sense that the equality 
\begin{align}\label{e:exact_PMP}
	&H^{\abnormal}\bigl(t,\etaf(t),\etag{0}(t),\ldots,\etag{T-2}(t),\stt{\as}{t},\cont{\as}{t}\bigr) \nn \\&=\max_{\dummyu \in \mathbb{U}} 	H^{\abnormal}\bigl(t,\etaf(t),\etag{0}(t),\ldots,\etag{T-2}(t),\stt{\as}{t},\dummyu\bigr). 
\end{align}
does not hold.\footnote{Observe that \eqref{rcpmp:hmc-p1} signifies that the Hamiltonian \eqref{eq:rcpmp_Ham-p1} does not increase locally along directions that emanate from \(\cont{\as}{t}\) and enter the set \(\Ubb{t}\) for every \(t\). The Hamiltonian maximization condition \eqref{e:exact_PMP} cannot be reproduced at this level of generality \cite[Chapter 6]{ref:BSM:06}.} However, under the assumptions of convexity, compactness, and continuous dependence on \(\dummyx\) of the set 
\begin{equation}\label{e:vel_cond}
    V_t(\dummyx) \Let \aset[\big]{\bigl(\cost(t,\dummyx,\dummyu),\field(t,\dummyx,\dummyu)\bigr)\suchthat \dummyu \in \Ubb{t}} \quad 
    \text{where }\dummyx \in \Mbb{t},
\end{equation}
for \(t=0,\dots,T\), we recover the \emph{exact} Hamiltonian maximization condition \eqref{e:exact_PMP} as stated in \cite[Theorem 24]{ref:VGB-75}. For example, the exact Hamiltonian maximization condition \eqref{e:exact_PMP} holds for the control affine system with specific structural assumptions; see \S\ref{subsec:cas} and the Corollary  \ref{cor:cas} for more detail.
\end{remark}



\subsection{Special case of control affine systems}
\label{subsec:cas}
Consider a discrete-time autonomous control-affine system given by
\begin{align}
\label{eq:control-affine}
    \stt{}{t+1} &= \field \bigl(\stt{}{t}\bigr)+\agield \bigl(\stt{}{t} \bigr)\cont{}{t} \quad \text{for all}\,\,t=\timestamp{0}{T-1},
\end{align}
where \(\stt{}{t} \in \Rbb^{d}\) is the state vector and \(\cont{}{t} \in \Rbb^m\) is the control action at time \(t\). The family of maps \(\Rbb^d  \ni \dummyx \mapsto \field(\dummyx) \in \Rbb^d\) and \(\Rbb^d  \ni \dummyx \mapsto \agield(\dummyx) \in \Rbb^m\) are continuously differentiable, that constitute the dynamics corresponding to \eqref{eq:control-affine}.
Consider the optimal control problem
\begin{equation}
        \label{eq:control-affine-ocp}
        \begin{aligned}
            &\minimize_{\stt{}{0}, (\cont{}{t})_{t=0}^{T-1}}  && \sum_{t=0}^{T-1}\cost\bigl(\stt{}{t},\cont{}{t}\bigr) +\cost_F(\stt{}{T}) \\
            &\hspace{2mm}\sbjto && \begin{cases}
            \text{dynamics\,} \eqref{eq:control-affine},\\
            \stt{}{t} \in \mathbb{M} \hspace{3mm}\text{for all \,} \oTt,\\
            \cont{}{t} \in \mathbb{U} \hspace{3.5mm}\text{for all\,} \,\otzo,\\
            \norm{\cont{}{t+1} - \cont{}{t} } \leqslant \Rcal \hspace{3mm}\text{for all\,} \,\otzt,\\\cost\bigl(\dummyx,\cdot\bigr):\mathbb{U} \lra \Rbb \, \text{is convex for}\,\, \dummyx \in \Rbb^d, \, t=0,\ldots,T-1,\\
            \text{\(\mathbb{U} \neq \emptyset,\) convex, compact}\,\text{for every}\,\, t=0,\ldots,T-1.
            \end{cases}
        \end{aligned}
\end{equation}
\begin{corollary}\label{cor:cas}
(Rate-constrained discrete-time PMP for control-affine systems). Consider the optimal control problem \eqref{eq:control-affine-ocp} with its associated data and suppose that the state-action pair \(\Bigl(\bigl(\stt{\as}{t}\bigr)_{t=0}^T,\bigl(\cont{\as}{t}\bigr)_{t=0}^{T-1}\Bigr)\) is an optimal state-action trajectory. Let assumption \eqref{assm:Assumption1} hold. Moreover, assume that the cost per stage \(\cost(\dummyx,\cdot)\) is convex whenever \(\dummyx \in \Rbb^d\) and the constraint set \(\mathbb{U}\) is a nonempty, convex and compact subset of \(\Rbb^m\). Define the Hamiltonian
\begin{equation}
\label{eq:rcpmp_Ham-control-aff}
    \begin{aligned}
        H^{\abnormal}\Bigl(\etaf(t),\etag{0}(t),\ldots,\etag{T-2}(t),\stt{}{t},\cont{}{t} \Bigr)\Let
        & \inprod{\etaf(t)}{\field\bigl(\stt{}{t}\bigr)+ \agield\bigl(\stt{}{t}\bigr)\cont{}{t}}\\&+  \inprod{\etag{t-1}(t)-\etag{t}(t)}{\cont{}{t}}- \abnormal \cost \bigl(\stt{}{t},\cont{}{t}\bigr). \nn
    \end{aligned}
\end{equation}
Then there exist
\begin{enumerate}[leftmargin=*, widest=b, align=left]
    \item  trajectories \(\bigl( \etaf(t) \bigr)_{t=0}^{T-1} \subset \bigl(\Rbb^{d}\bigr)^{\star}\), the sequence \(\bigl( \etax(t)\bigr)_{t=0}^T \subset \bigl(\Rbb^d\bigr)^{\star}\),
    
    \item trajectories \( \bigl\{\etag{k}\bigr\}:\aset[]{0,\ldots,T-1} \lra \bigl(\Rbb^{m}\bigr)^{\star}\) and the sequence \(\bigl\{\etay{k}\bigr\}:\{0,\ldots,T-1\} \lra (\Rbb^m)^{\star}\), such that \(\etag{k}(T-1) =\etay{k}(T)\) for all \(k=0,1,\ldots,T-2\) and  
    \begin{equation*}
        \begin{aligned}
            \etag{k}(t-1) =\begin{cases}
            \etay{k}(t)~&~\text{for}~t=k,\\
            \etag{k}(t)+\etay{k}(t)~&~\text{for all} ~t\neq k,
            \end{cases}
        \end{aligned}
    \end{equation*}
    
    \item $\abnormal \in \Rbb$,
\end{enumerate}
satisfying the following conditions 
	\begin{enumerate}[label=\textup{(\ref{eq:control-affine-ocp}-\alph*)}, leftmargin=*, widest=b, align=left]
		\item \label{rcpmp:nonneg-con-aff} non-negativity condition:  $\abnormal \geqslant 0$;
		
		\item \label{rcpmp:non-triv-con-aff} nontriviality: the tuple \(\Big{(}\abnormal,\big{(}\etaud{\mathrm{f}}{t} \big{)}_{t=0}^{T-1}\Big{)}\) do not vanish simultaneously;
		
		\item \label{rcpmp:state-con-aff}  the system state dynamics: for all \(t=0,1,\ldots,T-1,\)
			\begin{equation*}
				\begin{aligned}
				\stt{\as}{t+1} &=\frac{\partial }{\partial \psi}H^{\abnormal} \bigl(\etaf(t),\etag{0}(t),\ldots,\etag{T-2}(t),\stt{\as}{t}, \cont{\as}{t}  \bigr),
				\end{aligned}
			\end{equation*}
			
		\item \label{rcpmp:adj-con-aff}
		 the system adjoint dynamics: for all \(t=0,1,\ldots,T-2,\)
			\begin{equation*}
				\begin{aligned}
				    \etaf(t-1)&=\frac{\partial }{\partial \dummyx}H^{\abnormal}\big(\etaf(t),\etag{0}(t),\ldots,\etag{T-2}(t),\stt{\as}{t},\cont{\as}{t}\big) +\etax(t),
				\end{aligned}
			\end{equation*}
			
		\item \label{rcpmp:trv-con-aff} transversality conditions:
			\begin{equation*}
			    \begin{aligned}
			    \begin{cases}\frac{\partial }{\partial \dummyx}H^{\abnormal}\big(\etaf(0),\etag{0}(0),\ldots,\etag{T-2}(0),\stt{\as}{0},\cont{\as}{0}\big)+\etax(0)=0 \\
			    \etaf(-1)=0,\quad \etax(T)-\etaf(T-1)=0,\\ H^{\abnormal}\big(\etaf(T),\etag{0}(T),\ldots,\etag{T-2}(T),\stt{\as}{T},\cont{\as}{T}\big)=0,\end{cases}
			    \end{aligned}
			\end{equation*}

		\item \label{rcpmp:hmc-con-aff} the Hamiltonian maximization condition, pointwise in time: For every\\time \(t=0,\ldots,T-1\)
	    \begin{equation*}
		    \begin{aligned}
		        &H^{\abnormal}\bigl(\etaf(t),\etag{0}(t),\ldots,\etag{T-2}(t),\stt{\as}{t},\cont{\as}{t}\bigr)\\ &=\max_{\dummyu \in \mathbb{U}} 	H^{\abnormal}\bigl(\etaf(t),\etag{0}(t),\ldots,\etag{T-2}(t),\stt{\as}{t},\dummyu \bigr),  
		    \end{aligned}
	    \end{equation*}
    \end{enumerate}    
\end{corollary}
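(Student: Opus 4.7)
The plan is to apply Theorem \ref{rcpmp-p1} directly to problem \eqref{eq:control-affine-ocp} and then exploit the convexity in the control variable to upgrade the local first-order condition \ref{rcpmp:hmc-p1} to the exact (global) Hamiltonian maximization condition \ref{rcpmp:hmc-con-aff}.

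First I would verify that Assumption \ref{assm:Assumption1} is satisfied in the control-affine autonomous setting: the right-hand side \((\dummyx,\dummyu)\mapsto \field(\dummyx)+\agield(\dummyx)\dummyu\) is continuously differentiable because \(\field\) and \(\agield\) are; \(\cost(\dummyx,\dummyu)\) is continuously differentiable by hypothesis; and \(\mathbb{M}\) is nonempty and closed. Hence Theorem \ref{rcpmp-p1} applies to \eqref{eq:control-affine-ocp} and supplies \(\abnormal \geqslant 0\), covector trajectories \(\bigl(\etaf(t)\bigr)_{t=0}^{T-1}\) and \(\bigl(\etax(t)\bigr)_{t=0}^{T}\), together with the sequences \(\etag{k}(\cdot),\,\etay{k}(\cdot)\) obeying the recursion in item (2) of the Corollary. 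Items \ref{rcpmp:nonneg-con-aff}--\ref{rcpmp:trv-con-aff} then follow by direct substitution of the autonomous control-affine dynamics and cost into items \ref{rcpmp:nonneg-p1}--\ref{rcpmp:trv-p1} of Theorem \ref{rcpmp-p1}; in particular the Hamiltonian collapses to the explicit form \eqref{eq:rcpmp_Ham-control-aff} and no further work is needed for these items.

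The only nontrivial step is deriving \ref{rcpmp:hmc-con-aff}. Regarded as a function of \(\dummyu\) alone with all other arguments frozen at the optimal values, \eqref{eq:rcpmp_Ham-control-aff} decomposes as the affine map \(\dummyu \mapsto \inprod{\agield(\stt{\as}{t})^{\top}\etaf(t)+\etag{t-1}(t)-\etag{t}(t)}{\dummyu}\) (plus a \(\dummyu\)-independent term) minus \(\abnormal\,\cost(\stt{\as}{t},\dummyu)\); since \(\abnormal \geqslant 0\) and \(\cost(\stt{\as}{t},\cdot)\) is convex by hypothesis, the map \(\dummyu \mapsto H^{\abnormal}(\etaf(t),\ldots,\dummyu)\) is concave on \(\Rbb^{m}\). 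Because \(\mathbb{U}\) is convex, the translate \(\mathbb{U}-\cont{\as}{t}\) is itself a legitimate local tent of \(\mathbb{U}\) at \(\cont{\as}{t}\), so specializing \ref{rcpmp:hmc-p1} to the admissible directions \(\tilde{u}(t) = \dummyu - \cont{\as}{t}\) for \(\dummyu \in \mathbb{U}\) yields the variational inequality
\[
\Bigl\langle \frac{\partial}{\partial \dummyu}H^{\abnormal}\bigl(\etaf(t),\etag{0}(t),\ldots,\etag{T-2}(t),\stt{\as}{t},\cont{\as}{t}\bigr),\,\dummyu-\cont{\as}{t}\Bigr\rangle \leqslant 0 \quad \text{for every }\dummyu\in\mathbb{U}.
\]
For a differentiable concave function on a convex set this is the classical necessary \emph{and sufficient} characterization of a global maximizer, and compactness of \(\mathbb{U}\) guarantees that such a maximizer exists; this delivers exactly the equality asserted in \ref{rcpmp:hmc-con-aff}. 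The main obstacle I anticipate is identifying the abstract local tent produced inside the proof of Theorem \ref{rcpmp-p1} with the convex cone generated by \(\mathbb{U}-\cont{\as}{t}\); convexity of \(\mathbb{U}\) makes this identification routine (the tangent cone of \(\mathbb{U}\) at \(\cont{\as}{t}\) coincides with the closure of \(\bigcup_{\lambda>0}\lambda(\mathbb{U}-\cont{\as}{t})\)), but it is the only step that is not a pure specialization of Theorem \ref{rcpmp-p1}.
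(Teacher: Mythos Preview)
Your proposal is correct and follows essentially the same route as the paper: invoke Theorem \ref{rcpmp-p1} to obtain \ref{rcpmp:nonneg-con-aff}--\ref{rcpmp:trv-con-aff} verbatim, then use concavity of \(H^{\abnormal}\) in \(\dummyu\) together with convexity and compactness of \(\mathbb{U}\) to upgrade the local condition \ref{rcpmp:hmc-p1} to the exact maximization \ref{rcpmp:hmc-con-aff}. The paper's proof is terser---it establishes concavity via the Hessian and then appeals to Weierstrass---whereas you argue structurally (affine minus \(\abnormal\) times convex) and spell out the variational-inequality characterization of maximizers of concave functions over convex sets; your extra care about identifying the tent with the supporting cone of \(\mathbb{U}\) at \(\cont{\as}{t}\) is exactly the right way to close the one step the paper leaves implicit.
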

The proof of this corollary is provided in Appendix \ref{Appen:B}.

\section{Discussion and numerical experiment}
\label{sec:numerical exp}
The necessary conditions for optimality \eqref{rcpmp:nonneg-p1}-\eqref{rcpmp:hmc-p1} yield a two point boundary value problem (TPBVP) that can be solved by indirect single shooting/multiple shooting (see \cite{ref:Rao-10,ref:Betts1998,ref:JZ:ET:MC-2017}). However, solving the rate constrained optimal control by indirect shooting is non-trivial in the sense that an extensive combinatorial search needs to be executed over the constraint state space for every time instant \(t = 0, \ldots,T\) and for every \(k = 0,\ldots, T-2\). This requires a thorough investigation of its own and will be reported in the subsequent articles. 

Nevertheless, against the preceding backdrop, we provide a \emph{contrived} numerical example to study the behavior of a discrete-time controlled recursion under the rate constraints. Let us consider the following linear discrete-time autonomous recursion
\begin{equation}
\label{num:sys}
    \begin{pmatrix}x_1(t+1)\\x_2(t+1)\\x_3(t+1)\end{pmatrix}=\begin{pmatrix}\cos(\pi/4)&-\sin(\pi/4)&0\\\sin(\pi/4)&\cos(\pi/4) & 0\\0&0&1 \end{pmatrix}\begin{pmatrix}
    x_1(t)\\x_2(t)\\x_3(t)\end{pmatrix}+\begin{pmatrix}
    0\\1\\1\end{pmatrix}u(t).
\end{equation}
Consider the linear quadratic optimal control problem, for \(T>0\) preassigned,
\begin{equation}
        \label{num:econtrol-affine-ocp}
        \begin{aligned}
            &\minimize_{(\cont{}{t})_{t=0}^{T-1} }  && \sum_{t=0}^{T-1}\frac{1}{2}\biggl(\inprod{x(t)}{Qx(t)}+\inprod{u(t)}{Ru(t)}\biggr)+\|x(T)\|^2\\
            &\sbjto && \begin{cases}
            \text{dynamics\,} \eqref{num:sys},\\
            \begin{cases}
            -8 \leqslant x_1(t) \leqslant 8 \hspace{3mm}\text{for all\,\,} \oTt,\\
            -8 \leqslant x_2(t) \leqslant 8 \hspace{3mm}\text{for all\,\,} \oTt,\\
            -0.2 \leqslant x_3(t) \leqslant 8 \hspace{3mm}\text{for all\,\,} \oTt,\\
            \end{cases}\\
            \|\cont{}{t}\| \leqslant 1 \hspace{3mm}\text{for all\,\,} \otzo,\\
            \norm{\cont{}{t+1} - \cont{}{t} } \leqslant 0.75 \hspace{3mm}\text{for all\,\,} \otzt,
            \end{cases}
        \end{aligned}
\end{equation}
where \(Q\) is a \(3 \times 3\) identity matrix, and \(R=0.5\). We present simulation results for \(T=30\). Figure \ref{num:fig_exmpl_states} demonstrates the state trajectories in response to the rate constrained control, while Figure \ref{num:fig_exmpl_control} shows the rate constrained control trajectory. We reformulate the above problem by introducing a new variable \(v(t) \Let u(t+1)-u(t)\) as in and translate the rate constraints as the new \emph{control} constraints. The problem \eqref{num:econtrol-affine-ocp} is thus now a standard discrete-time OCP with state-action constraints and we employ YALMIP \cite{ref:YALMIP_lofberg2004} to solve the problem via the quadratic programming solver \texttt{quadpprog} in MATLAB.
\par It can be seen that the successive increment of the control is restricted between \(-0.75\) and \(0.75\) (see Figure \ref{num:fig_exmpl_control}) for every time instant and hitting the constant level several times.

\begin{figure}[!ht]
\centering
\includegraphics[width=12cm,height=8cm]{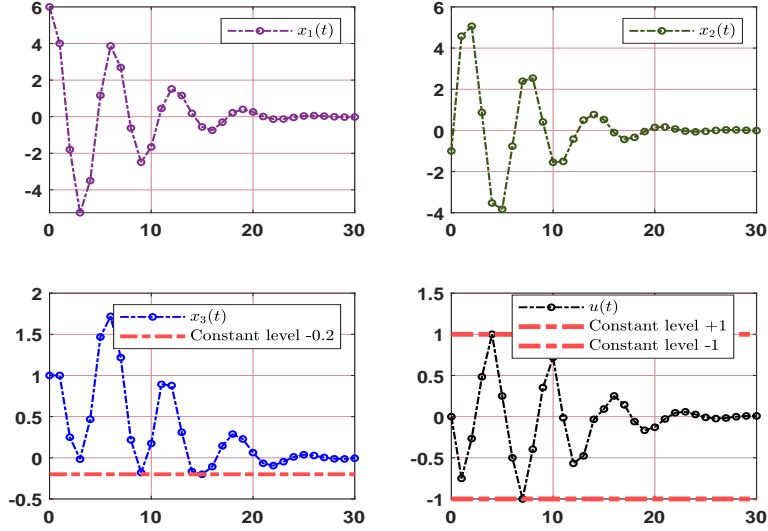}
\caption{Evolution of states and control trajectory corresponding to the system dynamics \eqref{num:sys}. The control trajectory \(\bigl(u(t)\bigr)_{t=0}^{T-1}\) (bottom-right) is obtained by solving the optimization problem \eqref{num:econtrol-affine-ocp}. The rate constraints are imposed during the design stage at every instant.}
\label{num:fig_exmpl_states}
\end{figure}

\begin{figure}[!ht]
\centering
\includegraphics[width=8.5cm,height=5cm]{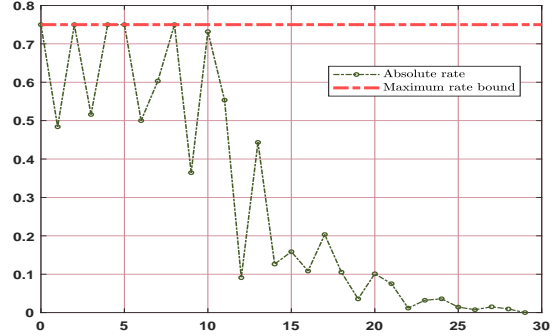}
\caption{Absolute value of the rate constraint imposed on the control trajectory. It can be observed that the rate constraint stays within the prescribed bound.} 
\label{num:fig_exmpl_control}
\end{figure}
Next, we demonstrate that if the constraints on the control action and its successive growth are not taken into account at the design stage, the consequences are drastic. This can be seen in Figure \ref{num:fig_nonex_1}. 
\begin{itemize}[leftmargin=*, widest=b, align=left]
	\item We consider the LQ problem \eqref{num:econtrol-affine-ocp} relaxing the constraints on the control action and its rate. 
    \item The optimization problem is solved and control trajectories are obtained.  Now we bound the control action and the rate, and feed it to the system dynamics \eqref{num:sys}. The corresponding response is shown in Figure \ref{num:fig_nonex_1}. This situation mimics a practical scenario where the constraints are inherently imposed by the actuators during the implementation stage. We can see from Figure \ref{num:fig_nonex_1} and \ref{num:fig_nonex_rate} that the generated control action is bounded between \(-1\) to \(1\) and is rate constrained between \(-0.75\) to \(0.75\). 
    \item Observe that the control trajectory obtained in Figure \ref{num:fig_nonex_1} is a rate constrained control trajectory that obeys the limitations imposed by the actuators, but choosing such a control action leads to an undesirable transient response, as is evident from Figure \ref{num:fig_nonex_1} and Figure \ref{num:fig_nonex_rate} and in many cases, is detrimental to the overall performance. 
	\end{itemize}
Hence, there exist a gap between the control actions that are being received at the actuator level and the ones that are faithfully executed.
\begin{remark}
    The numerical experiment in this section is provided to demonstrate the practical importance of rate constraints at the design stage. However, this article's objective is not to report developments on the numerical front using  indirect methods to solve the rate constrained optimal control problem \eqref{eq:original problem}. Such a development will be non-trivial and will require a separate investigation because a complicated search for the quantities \(\bigl(\etag{k}(t)\bigr)_{t=0}^{T-1}\) and \(\bigl(\etay{k}(t)\bigr)_{t=0}^T\) for every \(k=0,\ldots,T-2\), has to be performed in the dual cone contained in \(\bigl(\Rbb^m\bigr)\dual\), and will be reported in subsequent articles.
 \end{remark} 

\begin{figure}[!ht]
\centering
\includegraphics[width=12cm,height=8cm]{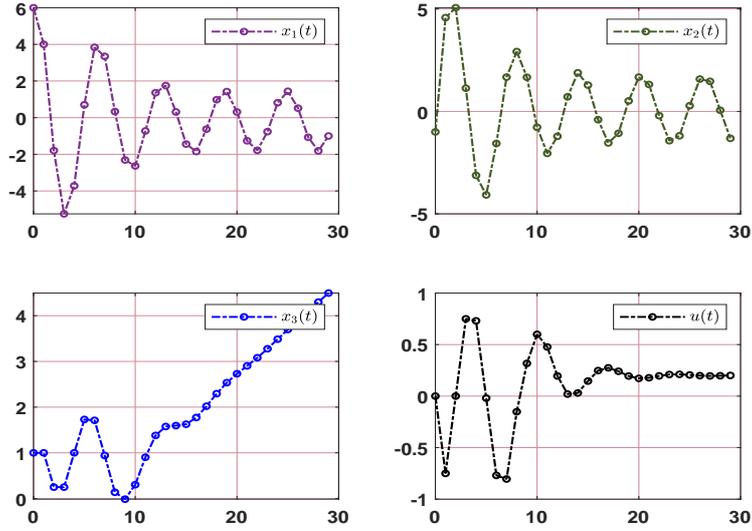}
\caption{Evolution of states trajectory corresponding to the state dynamics \eqref{num:sys}. The constraint on the control trajectories and their rate has not been taken into account during the design stage.} 
\label{num:fig_nonex_1}
\end{figure}

\begin{figure}[!htpb]
\centering
\includegraphics[width=8.5cm,height=5cm]{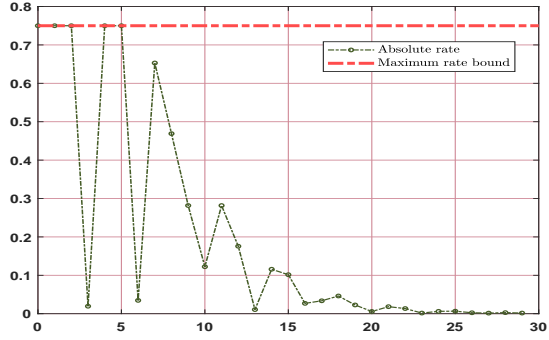}
\caption{Absolute value of the rate imposed on the control action during the implementation. The transient response of the system gets altered drastically, even though
the rate constraint stays within the prescribed bound \(r = 0.75\).}
\label{num:fig_nonex_rate}
\end{figure}


  
\section{Appendix A: Definitions}
\label{sec:appena}
This section presents some definitions verbatim and few basic theorems from the theory of convex analysis and tents. These standard definitions are lifted from \cite{ref:arxiv:PP:DC}.
\begin{itemize}[leftmargin=*, widest=b, align=left]
    \item Recall that a non-empty subset $K \subset \Rbb^{\genDim}$ is a cone if for every $y \in K$ and $\alpha \geqslant 0$ we have $\alpha y \in K$. In particular, $0 \in \Rbb^{\genDim}$ belongs to $K$. A non-empty subset $C \subset \Rbb^{\genDim}$ is convex if for every $y, y' \in C$ and $\theta \in [0, 1]$ we have $(1-\theta)y + \theta y' \in C$.
    \item A hyperplane $\Gamma$ in $\Rbb^{\genDim}$ is an $(\genDim -1)$-dimensional affine subset of $\Rbb^{\genDim}$. It can be viewed as the level set of a nontrivial linear function $p : \Rbb^{\genDim} \lra \Rbb$. If $p$ is given by $p(\genVar) = \inprod{a}{\genVar}$ for some $a (\neq 0) \in \Rbb^{\genDim}$, then
			\begin{equation*}
			     \Gamma \Let \big{\{}\genVar \in \Rbb^{\genDim} \mid \inprod{a}{\genVar} = \alpha\big{\}}.
			\end{equation*}
			\item We say that a family $\aset[]{\genTent_{0}, \genTent_{1}, \ldots, \genTent_{s}}$ of convex cones in $\Rbb^{\genDim}$ is separable if there exists a hyperplane $\Gamma$ and some $i \in \aset{0, \ldots, s}$ such that the cones $\genTent_{i}$ and $\bigcap_{j \neq i} \genTent_{j}$ are on two sides of $\Gamma$; formally, there exists $c \in \Rbb^{\genDim}$ and $i \in \aset{0, 1, \ldots, s}$ such that $K_i \subset \aset{y \in \Rbb^{\genDim} \suchthat \inprod{c}{y} \leqslant 0}$ and $\bigcap_{j \neq i} K_j \subset \aset{y\in\Rbb^{\genDim}\suchthat \inprod{c}{y} \geqslant 0}$. 
		\item Let $y \in \Rbb^{\genDim}$. A set $K \subset\Rbb^{\genDim}$ is a cone with vertex $y$ if it is expressible as $y + K'$ for some cone $K' \subset \Rbb^{\genDim}$. In particular, any cone is a cone with vertex $0 \in \Rbb^{\genDim}$.
	\item Let $\OMG$ be a nonempty set in $\Rbb^{\genDim}$. By $\affHull \OMG$ we denote the set of all affine combinations of points in $\OMG$. That is,
	\[
	\affHull \OMG = \aset[\Bigg]{\sum_{i=1}^{k} \theta_{i} \genVar_{i}\, \suchthat \sum_{i=1}^{k} \theta_{i} = 1, \quad \genVar_{i} \in \OMG \quad \text{for } i = 1, \ldots, k, \text{ and } k \in \N}.
    \]
	In other words, $\affHull \OMG$ is also the smallest affine set containing $\OMG$. The relative interior $\relInt \OMG$ of $\OMG$ denotes the interior of $\OMG$ relative to the affine space $\affHull \OMG$.
			\item Let $M$ be a convex set and $\vertex \in M$. The union of all the rays emanating from $\vertex$ and passing through points of $M$ other than $\vertex$ is a convex cone with vertex at $\vertex$. The closure of this cone is called the supporting cone of $M$ at $\vertex$.
			\item Let $\genTent \subset \Rbb^{\genDim}$ be a convex cone with vertex at $\vertex$. By $\dualCone{\genTent}$ we denote its polar (or dual) cone defined by
	\begin{equation*}
	\dualCone{\genTent} 
	\Let \aset[\big]{ y \in \big(\Rbb^{\genDim}\big){\dual} \, \big{|}\, \inprod{y}{x - \vertex} \leqslant 0 \quad \text{for all } x \in \genTent }.
	\end{equation*}
	It is clear that $\dualCone{\genTent}$ is a closed convex cone with vertex at $\vertex$ in view of the fact that it is an intersection of closed half-spaces:
	\[
	\dualCone{\genTent} = \bigcap_{y\in\genTent} \aset[\big]{z\in \big(\Rbb^{\genDim}\big){\dual}\,\big{|}\, \inprod{z}{y - \vertex} \geqslant 0}
	\]
\end{itemize}
\begin{theorem}[{{\cite[Theorem 7 on p.\ 10]{ref:VGB-75}}}]
\label{appen:bolt thrm 7 inseparability}
Let $s \in \N$, and for each $i = 1, \ldots, s$ let $\subSpace{i} \subset \Rbb^{\genDim}$ be a subspace satisfying $\subSpace{1} + \cdots + \subSpace{s} = \Rbb^{\genDim}$. For each $i = 1, \ldots, s$ let $\subSpace{i}^{\Delta}$ denote the direct sum of all subspaces $\subSpace{1}, \ldots, \subSpace{s}$  except $\subSpace{i}$, and $\genTent_{i}$ be a convex cone in $\subSpace{i}$ with a common vertex $\vertex \in \Rbb^{\genDim}$. If $N_{i} \Let \chull \bigl( \genTent_{i} \cup \subSpace{i}^{\Delta} \bigr)$ for each $i$, then $N_{i}$ is a convex cone, and the family $\aset[\big]{N_{i} \, \big{|}\, i = 1, \ldots s}$ is inseparable in $\Rbb^{\genDim}$, where we denote the convex hull of \(\genTent_{i} \cup \subSpace{i}^{\Delta}\) by \(\chull \bigl(\genTent_{i} \cup \subSpace{i}^{\Delta}\bigr).\)
\end{theorem}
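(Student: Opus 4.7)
The plan is to prove both assertions of Theorem \ref{appen:bolt thrm 7 inseparability} directly: first the easy claim that each $N_i$ is a convex cone, then the inseparability by a contradiction argument that exploits the separating functional itself against the subspace components built into the $N_i$'s.

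Without loss of generality I would translate so that the common vertex $\vertex = 0$, since both the cone structure and the separability notion are translation-equivariant. The first assertion is then routine: $K_i$ is a convex cone with vertex $0$ lying in $\subSpace{i}$, and $\subSpace{i}^{\Delta}$ is a linear subspace (hence itself a convex cone with vertex $0$) containing $0$; the convex hull of the union of two convex cones sharing a common vertex is again a convex cone with that vertex, yielding $N_i = \chull\bigl(\genTent_i \cup \subSpace{i}^{\Delta}\bigr)$ as a convex cone.

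For the inseparability, suppose for contradiction that the family $\aset{N_1,\ldots,N_s}$ is separable. By the definition recalled in Appendix~A there exist a nonzero $c \in \bigl(\Rbb^{n}\bigr){\dual}$ and an index $i_0 \in \aset{1,\ldots,s}$ such that
\[
N_{i_0} \subset \aset[\big]{y \in \Rbb^{n} \suchthat \inprod{c}{y} \leqslant 0}
\quad\text{and}\quad
\bigcap_{j\neq i_0} N_j \subset \aset[\big]{y \in \Rbb^{n} \suchthat \inprod{c}{y} \geqslant 0}.
\]
Now I would extract two orthogonality relations from the two inclusions. On the one hand, $\subSpace{i_0}^{\Delta} \subset N_{i_0}$ by construction, and $\subSpace{i_0}^{\Delta}$ is a \emph{linear subspace}; a subspace contained in the closed half-space $\aset{\inprod{c}{\cdot}\leqslant 0}$ must be annihilated by $c$, so $c \perp \subSpace{i_0}^{\Delta}$. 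On the other hand, for every $j \neq i_0$ the subspace $\subSpace{i_0}$ is one of the summands in the direct sum defining $\subSpace{j}^{\Delta}$, hence $\subSpace{i_0} \subset \subSpace{j}^{\Delta} \subset N_j$; taking the intersection over $j \neq i_0$ gives $\subSpace{i_0} \subset \bigcap_{j\neq i_0} N_j \subset \aset{\inprod{c}{\cdot}\geqslant 0}$, and the same subspace-in-a-halfspace argument yields $c \perp \subSpace{i_0}$.

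To finish, combine the two perpendicularity conclusions with the spanning hypothesis $\subSpace{1} + \cdots + \subSpace{s} = \Rbb^{n}$, which reads $\subSpace{i_0} + \subSpace{i_0}^{\Delta} = \Rbb^{n}$. Thus $c$ annihilates all of $\Rbb^{n}$, forcing $c = 0$ and contradicting nontriviality of the separating functional. The only delicate point in the plan — and the step I would double-check — is the correct accounting of the vertex $\vertex$ in the dual inequalities; once the translation to $\vertex = 0$ is made the half-space inequalities become homogeneous linear and the subspace-in-a-halfspace trick works cleanly.
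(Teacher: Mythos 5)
Your proof is correct. The paper itself offers no proof of this statement --- it is quoted in Appendix A directly from \cite{ref:VGB-75} as background --- and your argument is precisely the standard (Boltyanskii) one: after translating the common vertex to the origin, the subspace $\subSpace{i_0}^{\Delta}\subset N_{i_0}$ forces the separating functional $c$ to vanish on $\subSpace{i_0}^{\Delta}$, the inclusions $\subSpace{i_0}\subset\subSpace{j}^{\Delta}\subset N_{j}$ for $j\neq i_0$ force $c$ to vanish on $\subSpace{i_0}$, and the spanning hypothesis $\subSpace{i_0}+\subSpace{i_0}^{\Delta}=\Rbb^{\genDim}$ then gives $c=0$, contradicting nontriviality of the hyperplane.
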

Now move on to the theory of tents, where we will provide a few definitions and theorems. For a more comprehensive exposition on tents we refer \cite{ref:VGB-75}. 

\begin{definition}
		Let \(\OMG\) be a subset of $\Rbb^{\genDim}$ and let $\vertex \in \OMG$. A convex cone $Q \subset \Rbb^{\genDim}$ with vertex $\vertex$ is a tent of $\OMG$ at $\vertex$ if there exists a smooth map $\tentMap$ defined in a neighbourhood of $\vertex$ such that:\footnote{The theory also works for $\tentMap$ continuous.}
	\begin{enumerate}[leftmargin=*, align=left]
		\item $\tentMap (\genVar) = \genVar + o  (\genVar - \vertex )$,\footnote{\label{fn:o-Notation} Recall the Landau notation $\genFun (\genVar) = o (\genVar)$ that stands for a function $\genFun(0) = 0$ and $\lim_{\genVar \to 0} \frac{\abs{\genFun(\genVar)}}{\abs{\genVar}} = 0$.} and
		\item there exists $\veps > 0$ such that $\tentMap (\genVar) \in \OMG$ for $\genVar \in Q \cap \ball{\vertex}$.
	\end{enumerate}
	\end{definition}
	
We say that a convex cone $\genTent \subset \Rbb^{\genDim}$ with vertex at $\vertex$ is a local tent of $\OMG$ at $\vertex$ if, for every $\genVar \in \relInt \genTent$, there is a convex cone $Q \subset \genTent$ with vertex at $\vertex$ such that $Q$ is a tent of $\OMG$ at $\vertex$, $\genVar \in \relInt Q$, and $\affHull Q = \affHull \genTent$. Observe that if $\genTent$ is a tent of $\OMG$ at $\vertex$, then $\genTent$ is a local tent of $\OMG$ at $\vertex$.

We need the following theorems on tents in the formulation of our PMP in the sequel.

	\begin{theorem}[{{\cite[Theorem 8 on p.\ 11]{ref:VGB-75}}}]
	\label{appen:bolt thrm 8 tangent plane}
		Let $\OMG$ be a smooth manifold in $\Rbb^{\genDim}$ and $\genTent$ the tangent plane to $\OMG$ at $\vertex \in \OMG$. Then $\genTent$ is a tent of $\OMG$ at $\vertex$.
	\end{theorem}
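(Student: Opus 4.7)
The plan is to exploit the smooth manifold structure of $\OMG$ near $\vertex$ to construct an explicit smooth map $\tentMap$ certifying that the tangent plane $\genTent$ is a tent. First I would observe that $\genTent$ is automatically a convex cone with vertex $\vertex$: being the tangent plane, it has the form $\genTent = \vertex + V$, where $V \subset \Rbb^{\genDim}$ is the tangent subspace, and any such affine translate of a linear subspace is closed under the operation $y \mapsto \vertex + \alpha(y - \vertex)$ for $\alpha \geqslant 0$. So only the existence of the certifying map $\tentMap$ remains to be established.

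Next I would invoke the definition of smooth manifold to obtain a smooth local parametrization $\varphi : W \to \OMG$, with $W \subset \Rbb^{k}$ an open neighborhood of $0$, $\varphi(0) = \vertex$, and $D\varphi(0) : \Rbb^{k} \to V$ a linear isomorphism onto the tangent subspace. Letting $\pi : \Rbb^{\genDim} \to V$ denote the orthogonal projection onto $V$, I would then construct the candidate map as
\[
\tentMap(y) \Let \varphi\bigl((D\varphi(0))^{-1} \pi(y - \vertex)\bigr) \;+\; \bigl(y - \vertex - \pi(y - \vertex)\bigr),
\]
defined and smooth on a neighborhood $U$ of $\vertex$ in $\Rbb^{\genDim}$ chosen so that $(D\varphi(0))^{-1}\pi(y - \vertex) \in W$ for every $y \in U$. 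The idea is transparent: the first summand bends $\pi(y - \vertex)$ onto $\OMG$ via the parametrization, while the second summand preserves the component of $y - \vertex$ normal to $V$ so that $\tentMap$ is a first-order identity on the ambient space.

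Finally I would verify the two defining properties of a tent. Evaluating at $y = \vertex$ gives $\tentMap(\vertex) = \varphi(0) = \vertex$, and a direct differentiation yields
\[
D\tentMap(\vertex) \;=\; D\varphi(0)\,(D\varphi(0))^{-1}\,\pi \;+\; (\mathrm{Id} - \pi) \;=\; \pi + (\mathrm{Id} - \pi) \;=\; \mathrm{Id}_{\Rbb^{\genDim}},
\]
so by Taylor expansion $\tentMap(y) = y + o(y - \vertex)$. Moreover, whenever $y \in \genTent$ lies near $\vertex$, we have $y - \vertex \in V$, so $\pi(y - \vertex) = y - \vertex$; the second summand vanishes and $\tentMap(y) = \varphi((D\varphi(0))^{-1}(y - \vertex))$, which is in $\OMG$ by construction. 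Hence both conditions in the definition of a tent are met on $\genTent \cap \ball{\vertex}$ for a sufficiently small radius $\veps > 0$.

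The main obstacle, in my view, is essentially bookkeeping rather than conceptual: one must shrink the ambient neighborhood $U$ enough that $(D\varphi(0))^{-1}\pi(\cdot - \vertex)$ lands inside $W$, and one must resist the temptation to define $\tentMap$ directly as a retraction (e.g. via the inverse of $\pi\restriction_{\OMG}$) because such a retraction would have derivative $\pi$ rather than $\mathrm{Id}_{\Rbb^{\genDim}}$ at $\vertex$ and would therefore fail the first-order condition $\tentMap(y) = y + o(y - \vertex)$ outside $V$. The explicit normal-correction term $(y - \vertex) - \pi(y - \vertex)$ is precisely what repairs this defect while remaining inert on $\genTent$.
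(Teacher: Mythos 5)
Your proof is correct. Note that the paper does not prove this statement at all --- it is quoted verbatim from \cite[Theorem 8]{ref:VGB-75} as background material --- but your construction is sound and is essentially the classical argument: the composite $\varphi\circ(D\varphi(0))^{-1}\circ\pi$ bends the tangential component onto $\OMG$, the normal correction $(\mathrm{Id}-\pi)(y-\vertex)$ restores $D\tentMap(\vertex)=\mathrm{Id}$ so that $\tentMap(y)=y+o(y-\vertex)$, and on $\genTent$ the correction vanishes so $\tentMap$ maps $\genTent\cap\ball{\vertex}$ into $\OMG$ for $\veps$ small enough; your closing remark about why a naive retraction fails the first-order condition is exactly the right point of care.
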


	\begin{theorem}[{{\cite[Theorem 9 on p.\ 12]{ref:VGB-75}}}]
	\label{appen:bolt thrm 9 half-space tent }
		Given a smooth function $\genFun : \Rbb^{\genDim} \lra \Rbb$, let $\vertex$ be such that $\derivative{\genVar}{\genFun (\vertex )} \neq 0$. Define sets $\OMG, \OMG_{0} \in \Rbb^{\genDim}$ as
		\[
			\OMG \Let  \aset[\big]{ \genVar \in \Rbb^{\genDim}\, \big{|}\, \genFun (\genVar ) \leqslant \genFun (\vertex )}, \quad \OMG_{0} \Let \aset[\big]{ \vertex } \cup  \left\{\genVar \in \Rbb^{\genDim} \, \big{|}\, \genFun (\genVar ) < \genFun (\vertex )\right\}.
		\]
		Then the half-space $\genTent$ given by the inequality $\inprod{\derivative{\genVar}{\genFun (\vertex )}}{\genVar - \vertex} \leqslant 0$ is a tent of both $\OMG$ and $\OMG_{0}$ at $\vertex$.
	\end{theorem}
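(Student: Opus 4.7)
The plan is to reduce the problem to a canonical coordinate system in which the gradient $a \Let \derivative{\genVar}{\genFun(\vertex)}$ is aligned with a coordinate axis, parametrise the level surface $\{\genFun = \genFun(\vertex)\}$ locally as the graph of a smooth scalar function via the implicit function theorem, and then exhibit an explicit smooth correction map $\tentMap$ that bends points of the half-space $\genTent$ onto the correct side of this graph. After translating so that $\vertex = 0$ and $\genFun(\vertex) = 0$ and applying a non-singular linear change of coordinates, I may assume $a = e_1$, in which case $\genTent = \{y = (y_1, y') \in \Rbb \times \Rbb^{\genDim - 1} : y_1 \leqslant 0\}$. Since $\partial_{y_1}\genFun(0) = 1 \neq 0$, the implicit function theorem produces a smooth $g$ defined on a neighbourhood of $0 \in \Rbb^{\genDim - 1}$ with $g(0) = 0$ such that the level surface coincides with the graph $\{(g(y'), y')\}$ near the origin; implicit differentiation gives $\nabla g(0) = -\nabla_{y'}\genFun(0)/\partial_{y_1}\genFun(0) = 0$, hence $g(y') = o(\abs{y'})$. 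Continuity of $\partial_{y_1}\genFun$ together with $\partial_{y_1}\genFun(0) > 0$ then yields the local descriptions $\OMG = \{y : y_1 \leqslant g(y')\}$ and $\OMG_{0} \setminus \{\vertex\} = \{y : y_1 < g(y')\}$.

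With this picture in hand I would define the candidate tent map $\tentMap : y \mapsto y + g(y')\, e_1$ on a neighbourhood of $0$. The requirement $\tentMap(y) = y + o(y - \vertex)$ is immediate because $g(y') = o(\abs{y'}) = o(\abs{y})$. For $y \in \genTent$ sufficiently close to $0$ we have $y_1 \leqslant 0$, so $\tentMap(y)_1 = y_1 + g(y') \leqslant g(y') = g(\tentMap(y)')$, which places $\tentMap(y)$ inside $\OMG$; this establishes that $\genTent$ is a tent of $\OMG$ at $\vertex$. For $\OMG_{0}$, where a strict inequality is required off $\vertex$, I would perturb by a term that is $o(\abs{y})$ yet strictly negative off the origin, setting $\tentMap_{0}(y) \Let y + \bigl(g(y') - \abs{y}^{2}\bigr)\, e_1$. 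The estimate $\tentMap_{0}(y) = y + o(y - \vertex)$ persists since $\abs{y}^{2} = o(\abs{y})$; for nonzero $y \in \genTent$, $\tentMap_{0}(y)_1 - g(\tentMap_{0}(y)') = y_1 - \abs{y}^{2} < 0$, so $\tentMap_{0}(y) \in \OMG_{0} \setminus \{\vertex\}$, while $\tentMap_{0}(0) = \vertex \in \OMG_{0}$ trivially.

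The main delicate point is the Landau-order bookkeeping together with tracking the sign conventions produced by the implicit function theorem, so that the graph correction $g(y')$ actually moves points of the half-space \emph{into} $\OMG$ rather than away from it; this is secured by the choice $a = +e_{1}$ in the canonical reduction, which forces $\partial_{y_1}\genFun(0) > 0$. Beyond this, the argument is essentially geometric: the half-space $\genTent$ differs from the true sublevel set $\OMG$ near $\vertex$ only by the higher-order bend $g(y') = o(\abs{y'})$ of the level surface, which a first-order correction absorbs, and the open-case set $\OMG_{0}$ is handled by the additional quadratic slack $-\abs{y}^{2}\, e_{1}$ that vanishes to higher order but is strictly negative off $\vertex$. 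No machinery beyond the implicit function theorem and elementary Landau estimates is required.
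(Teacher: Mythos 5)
This statement is one of the background results quoted verbatim in Appendix~A from Boltyanskii's monograph (cited there as Theorem~9 on p.~12 of \cite{ref:VGB-75}); the paper itself supplies no proof, so there is nothing internal to compare your argument against. On its own terms your proof is correct and complete. The reduction to $a=e_1$ by an invertible linear change of coordinates is legitimate (tents are preserved under such maps, since the conjugated tent map $L\circ\tentMap\circ L^{-1}$ still satisfies the $o(\cdot)$ estimate), the implicit function theorem gives the graph description $y_1=g(y')$ with $\nabla g(0)=0$ and hence $g(y')=o(\abs{y'})=o(\abs{y})$, the sign bookkeeping via $\partial_{y_1}\genFun(0)=1>0$ correctly identifies $\OMG$ locally with $\{y_1\leqslant g(y')\}$, and the explicit maps $y\mapsto y+g(y')e_1$ and $y\mapsto y+(g(y')-\abs{y}^2)e_1$ verify the two clauses of the tent definition for $\OMG$ and $\OMG_0$ respectively; the quadratic slack is exactly what is needed to convert the weak inequality into a strict one off the vertex while remaining $o(\abs{y})$. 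This is essentially the standard argument (a first-order graph correction absorbing the higher-order bend of the level surface), and it requires no machinery beyond what you invoke. The only point worth making explicit in a polished write-up is the preservation of the tent property under the linear coordinate change, which you use implicitly.
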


	\begin{theorem}[{{\cite[Theorem 10 on p.\ 12]{ref:VGB-75}}}]
	\label{appen:bolt thrm 10 supporting cone tent}
		Let $\OMG \in \Rbb^{\genDim}$ be a convex set and let $\genTent$ be its supporting cone at $\vertex \in \OMG$. Then $\genTent$ is a local tent of $\OMG$ at $\vertex$.
	\end{theorem}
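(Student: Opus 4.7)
The plan is to construct, for each $\genVar \in \relInt \genTent$, a simplicial convex cone $Q$ sitting inside $\genTent$ that serves as the required local tent. Let $k \Let \dim \affHull \genTent$ and set $L \Let \affHull \genTent - \vertex$, a linear subspace of dimension $k$. Writing $\genTent_{0}$ for the (non-closed) union of rays from $\vertex$ through points of $\OMG \setminus \aset{\vertex}$ that appears in the definition of the supporting cone, I would first observe that $\relInt \genTent = \relInt \genTent_{0}$ (since relative interiors of convex sets are preserved under closure), so the chosen $\genVar$ already lies in $\genTent_{0}$.

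The central lemma is the production of points $m_{1}, \ldots, m_{k} \in \OMG \setminus \aset{\vertex}$ whose difference vectors $m_{i} - \vertex$ form a basis of $L$ and yield a decomposition $\genVar - \vertex = \sum_{i=1}^{k} \lambda_{i} (m_{i} - \vertex)$ with every $\lambda_{i} > 0$. I would first pick a basis $(e_{i})_{i=1}^{k}$ of $L$ in which $\genVar - \vertex$ has strictly positive coordinates (such a basis exists for any nonzero vector in $L$, e.g., by perturbing the vector $\genVar - \vertex$ itself along linearly independent directions summing to zero), and then perturb $\genVar$ by a small $\delta > 0$ along each $e_{i}$. Because $\genVar \in \relInt \genTent_{0}$, every such perturbation stays in $\genTent_{0}$ and therefore lies on a ray from $\vertex$ through some $m_{i} \in \OMG$; for generic $\delta$ the resulting vectors $m_{i} - \vertex$ are linearly independent, and the positive coordinate assumption translates, after a short linear-algebraic computation (inverting the rank-one perturbation of a diagonal matrix), into the strict positivity of the coefficients $\lambda_{i}$.

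Defining $Q \Let \vertex + \mathrm{cone}(m_{1} - \vertex, \ldots, m_{k} - \vertex)$, the required structural properties fall out immediately: $Q \subset \genTent$ since each generator lies in $\genTent_{0} \subset \genTent$ and $\genTent$ is a convex cone, $\affHull Q = \vertex + L = \affHull \genTent$ by linear independence, and $\genVar \in \relInt Q$ by the strict positivity of the decomposition. For the tent property itself, take $\tentMap$ to be the identity map, which trivially satisfies $\tentMap(y) = y + o(y - \vertex)$. For any $y \in Q$, the unique decomposition $y - \vertex = \sum_{i=1}^{k} t_{i}(m_{i} - \vertex)$ with $t_{i} \ge 0$ rewrites as $y = \bigl(1 - \sum_{i} t_{i}\bigr) \vertex + \sum_{i} t_{i} m_{i}$; linear independence of the generators forces $t_{i} \to 0$ as $y \to \vertex$, so for sufficiently small $\veps$ and $y \in Q \cap \ball{\vertex}$ we have $\sum_{i} t_{i} \le 1$, and convexity of $\OMG$ then gives $y \in \OMG$. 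The principal obstacle is the first step, namely producing the basis $m_{1}, \ldots, m_{k}$ with the strict positivity property; once it is in hand, the remainder is a direct verification leveraging convexity of $\OMG$ to make the identity map serve as a trivial tent map.
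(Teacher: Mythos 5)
The paper never proves this statement---it is imported verbatim as \cite[Theorem 10]{ref:VGB-75} and used as a black box---so there is no in-paper proof to compare against; your argument is essentially Boltyanskii's classical one: pass to a simplicial subcone spanned by points of $\OMG$, let the identity be the tent map, and let convexity of $\OMG$ absorb small conical combinations as convex combinations of $\vertex, m_1,\ldots,m_k$. The verification steps (that $Q\subset\genTent$, $\affHull Q=\affHull\genTent$, $\genVar\in\relInt Q$, and $t_i\to 0$ as $y\to\vertex$ by linear independence) are all sound, as is the observation $\relInt\genTent=\relInt\genTent_0$, which does require noting that the union of rays $\genTent_0$ is itself convex because $\OMG$ is. Two loose ends are worth tightening. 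First, your construction of $m_1,\ldots,m_k$ with strictly positive coefficients is described only vaguely; a clean version: choose $g_1,\ldots,g_{k-1}$ so that $\aset{\genVar-\vertex,\,g_1,\ldots,g_{k-1}}$ is a basis of $L$, set $f_i=(\genVar-\vertex)+\delta g_i$ for $i<k$ and $f_k=(\genVar-\vertex)-\delta\sum_{i<k}g_i$; then $\genVar-\vertex=\tfrac1k\sum_i f_i$, the $f_i$ are linearly independent for every $\delta\neq 0$, and for $\delta$ small each $\vertex+f_i\in\relInt\genTent_0$, hence $f_i=\mu_i(m_i-\vertex)$ with $\mu_i>0$ and $m_i\in\OMG$, giving the desired strictly positive decomposition. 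Second, the degenerate case $\genVar=\vertex\in\relInt\genTent$ escapes your construction, since no simplicial cone of positive dimension contains its vertex in its relative interior; but in that case $\genTent$ must equal its affine hull, which forces $\vertex\in\relInt\OMG$, and the identity map on $Q=\genTent$ works directly. Neither issue affects the correctness of the approach.
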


	\begin{theorem}[{{\cite[Theorem 12 on p.\ 14]{ref:VGB-75}}}]
	\label{appen:bolt thrm 12 separability}
		Let $\OMG_{0}, \OMG_{1}, \ldots, \OMG_{s}$ be  subsets of $\Rbb^{\genDim}$ with a common point $\vertex$, and $\genTent_{0}, \genTent_{1}, \ldots, \genTent_{s}$ local tents of these sets at $\vertex$. If the family of cones $\aset[\big]{\genTent_{0}, \genTent_{1}, \ldots, \genTent_{s}}$ is inseparable and at least one of the cones is not a plane, then there exists $\genVar' \in \OMG_{0} \cap \OMG_{1} \cap \ldots \cap \OMG_{s}$ and $\genVar' \neq \vertex$.
	\end{theorem}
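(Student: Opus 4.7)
The plan is to produce a common point $y'\neq\vertex$ in $\bigcap_{i=0}^{s}\OMG_i$ in two stages: first locate a nonzero common direction inside $\bigcap_{i=0}^{s}\genTent_i$, and then realize that direction as a genuine point of the intersection of the sets via the tent maps and a Brouwer-type fixed-point argument. For the first stage I would translate inseparability into polar-cone language: since no hyperplane separates any $\genTent_i$ from $\bigcap_{j\neq i}\genTent_j$, the polars $\dualCone{\genTent_i}$ cannot span a hyperplane with a signed decomposition, which forces $\bigcap_{i=0}^{s}\genTent_i$ to have nonempty relative interior. The hypothesis that some $\genTent_{i_0}$ is not a linear subspace then yields a vector $y^{\star}\in\relInt\bigl(\bigcap_{i=0}^{s}\genTent_i\bigr)$ with $y^{\star}\neq \vertex$ that points strictly into the non-planar side of $\genTent_{i_0}$, a piece of ``transverse room'' that becomes crucial in the final topological step.

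Next I would pass through the tent maps. Each $\genTent_i$ being a local tent supplies, for every $i$, a sub-cone $Q_i\subseteq\genTent_i$ with $\affHull Q_i=\affHull\genTent_i$ containing $y^{\star}$ in its relative interior, together with a smooth map $\tentMap_i$ satisfying $\tentMap_i(y)=y+o(y-\vertex)$ and $\tentMap_i(y)\in\OMG_i$ for $y\in Q_i$ close to $\vertex$. For sufficiently small $t>0$, the point $\vertex+ty^{\star}$ lies in every $Q_i$, yielding candidates $\tentMap_i(\vertex+ty^{\star})\in\OMG_i$ that all coincide with $\vertex+ty^{\star}$ up to an error of order $o(t)$; however, they need not be equal, so a deformation argument is needed to collapse the $s+1$ candidates into a single point that simultaneously lies in every $\OMG_i$.

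The hardest and essential step, which is the heart of Boltyanskii's original proof, is this topological combination. I would introduce a small convex neighbourhood $\Delta_t\subset\bigcap_{i=0}^{s}Q_i$ of $\vertex+ty^{\star}$ and define a continuous map $\Phi_t:\Delta_t\to\Rbb^{\genDim}$ built from the displacements $y\mapsto y-\tentMap_i(y)$ together with weights adapted to inseparability, exactly in the spirit of Theorem~\ref{appen:bolt thrm 7 inseparability}. Inseparability is what forbids $\Phi_t$ from retracting $\Delta_t$ onto its relative boundary, so Brouwer's fixed-point theorem (equivalently, the non-retraction of the disk to its sphere) produces a fixed point $y^{\dagger}\in\Delta_t$ at which $\tentMap_i(y^{\dagger})=y^{\dagger}$ for every $i$, giving the desired $y'\Let y^{\dagger}\in\bigcap_{i=0}^{s}\OMG_i$ distinct from $\vertex$. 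The non-plane hypothesis on at least one cone is indispensable here: if every $\genTent_i$ were a linear subspace, the displacements would be free to lie on both sides of every supporting hyperplane, the local degree of $\Phi_t$ would vanish, and the fixed-point step would collapse.

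The main obstacle is precisely the construction of $\Phi_t$ and the verification that inseparability implies the relevant non-retraction property; once one has the correct partition of $\Delta_t$ into faces indexed by the cones and a compatible set of weights, the Brouwer application is immediate, but extracting this combinatorial-topological structure from the abstract inseparability of $\aset[\big]{\genTent_0,\ldots,\genTent_s}$ is where the proof is delicate.
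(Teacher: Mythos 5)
The paper does not prove this statement: Theorem \ref{appen:bolt thrm 12 separability} is quoted verbatim from \cite[Theorem 12]{ref:VGB-75} as background in Appendix A, so the only proof to compare against is Boltyanskii's. Your first stage is essentially sound: inseparability, through the dual characterization (no choice of \(a_i\in\dualCone{\genTent_i}\), not all zero, with \(a_0+\cdots+a_s=0\)) together with the separation theorem for finitely many convex sets, yields \(\bigcap_{i}\relInt\genTent_i\neq\emptyset\), and the non-plane hypothesis upgrades this to a point \(y^{\star}\neq\vertex\), since a convex cone containing its vertex in its relative interior is a plane. (As stated, ``nonempty relative interior'' is vacuous for any nonempty convex set; what you actually need and can get is a non-vertex point of \(\bigcap_i\relInt\genTent_i\).)

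The genuine gap is in the topological step. You seek a single \(y^{\dagger}\in\Delta_t\subset\bigcap_iQ_i\) with \(\tentMap_i(y^{\dagger})=y^{\dagger}\) for every \(i\). Such a common fixed point need not exist: a tent map is only required to satisfy \(\tentMap_i(\genVar)=\genVar+o(\genVar-\vertex)\) and may have no fixed point other than \(\vertex\). For instance, with \(\OMG_0=\{(x,y)\in\Rbb^2 : y\geqslant x^2\}\), tent \(\genTent_0=\{y\geqslant 0\}\), the admissible tent map \(\tentMap_0(x,y)=(x,\,y+x^2+y^2)\) fixes only the origin; pairing with \(\OMG_1=\genTent_1=\Rbb^2\) gives an inseparable family with one non-plane cone on which your scheme produces nothing, although the conclusion of the theorem trivially holds. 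Moreover, a fixed point of a weighted combination \(\Phi_t(y)=y-\sum_iw_i(y)\bigl(y-\tentMap_i(y)\bigr)\) only forces the weighted sum of the displacements to vanish, not each displacement individually; and \(\Delta_t\) lies in \(\affHull\bigl(\bigcap_iQ_i\bigr)\), which may have far fewer dimensions than the roughly \(s\genDim\) scalar conditions needed to make \(s+1\) images agree. What must actually be proved is that the \emph{images} \(\tentMap_0(Q_0\cap\ball{\vertex}),\ldots,\tentMap_s(Q_s\cap\ball{\vertex})\) share a point other than \(\vertex\), i.e.\ that there exist generally \emph{distinct} preimages \(z_i\in Q_i\) with \(\tentMap_0(z_0)=\cdots=\tentMap_s(z_s)\); Boltyanskii establishes this intersection-of-images lemma by a degree/non-retraction argument on a product of the cones after reducing, via Theorem \ref{appen:bolt thrm 7 inseparability}, to half-spaces and subspaces in general position, and that is where inseparability does its work. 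Until that lemma is formulated and proved, the proposal does not establish the theorem.
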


	\begin{proposition}
	\label{appen:prop optimality condition}
		A function $\genFun (\genVar )$ considered on the set $\feas = \OMG_{1} \cap \ldots \cap \OMG_{s}$, attains its minimum at $\vertex$ if and only if
		\[
			\Omega_{0} \cap \OMG_{1} \cap \ldots \cap \OMG_{s} = \{\vertex\},
		\]
		where $\Omega_{0} \Let \{\vertex \} \cup \aset[\big]{\genVar \in \Rbb^{\genDim} \, \big{|}\, \genFun (\genVar) < \genFun (\vertex)}$.
	\end{proposition}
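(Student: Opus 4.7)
The plan is to establish the biconditional by proving each direction directly, essentially by unpacking the definitions. This proposition is a purely set-theoretic reformulation of ``$\vertex$ is a minimizer on $\feas$'', so no appeal to convexity, the theory of tents, or separation theorems is needed at this stage; the statement's role is to serve as the bookkeeping bridge that subsequently lets one translate constrained optimality into inseparability of tents via Theorem~\ref{appen:bolt thrm 12 separability}. The whole argument is contrapositive/direct reasoning from the very definition of $\Omega_0$.

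For the forward implication, I would assume that $\genFun$ attains its minimum on $\feas = \OMG_1 \cap \cdots \cap \OMG_s$ at $\vertex$, which implicitly requires $\vertex \in \feas$. I would then take an arbitrary point $\genVar \in \Omega_0 \cap \OMG_1 \cap \cdots \cap \OMG_s$ and show $\genVar = \vertex$. By the definition $\Omega_0 = \{\vertex\} \cup \aset[\big]{\genVar \in \Rbb^{\genDim} \suchthat \genFun(\genVar) < \genFun(\vertex)}$, either $\genVar = \vertex$ (done) or $\genFun(\genVar) < \genFun(\vertex)$; in the latter case, $\genVar \in \OMG_1 \cap \cdots \cap \OMG_s = \feas$ contradicts the assumed minimality at $\vertex$. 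The reverse inclusion $\{\vertex\} \subset \Omega_0 \cap \OMG_1 \cap \cdots \cap \OMG_s$ is immediate: $\vertex \in \Omega_0$ by construction and $\vertex \in \feas$ by hypothesis. Hence the intersection equals $\{\vertex\}$.

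For the reverse implication, I would assume $\Omega_0 \cap \OMG_1 \cap \cdots \cap \OMG_s = \{\vertex\}$ and argue by contradiction. If $\vertex$ did not minimize $\genFun$ on $\feas$, there would exist some $\genVar' \in \feas$ with $\genFun(\genVar') < \genFun(\vertex)$. Such a $\genVar'$ lies in $\Omega_0$ by definition and in $\OMG_1 \cap \cdots \cap \OMG_s$ by feasibility, hence in the intersection, so $\genVar' = \vertex$; but then $\genFun(\vertex) < \genFun(\vertex)$, a contradiction. Note also that membership $\vertex \in \OMG_1 \cap \cdots \cap \OMG_s$ is forced by the equality $\Omega_0 \cap \OMG_1 \cap \cdots \cap \OMG_s = \{\vertex\}$, so the phrase ``attains its minimum at $\vertex$'' is well-posed under the reverse hypothesis too.

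The only ``obstacle'' worth flagging is cosmetic: one must be careful about the singleton $\{\vertex\}$ appended to the definition of $\Omega_0$, which is precisely what makes the characterization an equality (rather than requiring a separate statement that $\vertex$ itself belongs to the intersection), and one must keep in mind that the containment $\vertex \in \feas$ is built into the phrase ``attains its minimum at $\vertex$''. Once these are acknowledged, the argument is a short, symmetric contrapositive on each side, and the proposition follows with no further machinery.
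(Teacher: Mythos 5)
Your argument is correct. Note, however, that the paper itself offers no proof of this proposition: it appears in Appendix~A among the standard definitions and results quoted from \cite{ref:VGB-75}, so there is no in-paper argument to compare against. Your two-directional, purely set-theoretic unpacking of the definition of $\Omega_0$ is exactly the right (and essentially the only) way to establish it: the forward direction shows any point of $\Omega_{0} \cap \OMG_{1} \cap \cdots \cap \OMG_{s}$ other than $\vertex$ would violate minimality, and the reverse direction shows a better feasible point would be forced to equal $\vertex$, a contradiction. Your observations that the appended singleton $\{\vertex\}$ is what turns the characterization into an exact equality, and that $\vertex \in \feas$ is implicit in ``attains its minimum at $\vertex$'' (and is likewise forced by the set equality in the converse), are precisely the points worth making explicit; no convexity, tents, or separation machinery is needed.
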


\begin{theorem}[{{\cite[Theorem 16 on p.\ 20]{ref:VGB-75}}}]
	\label{appen:bolt thrm 16 necessary condition}
	Let $\OMG_{1}, \ldots, \OMG_{s}$ be subsets of $\Rbb^{\genDim}$ and let $\bigcap_{k=1}^s \OMG_k\ni\genVar\mapsto \genFun (\genVar )\in\Rbb$ be a smooth function. Let $\feas = \bigcap_{k=1}^s\OMG_{k}$, let $\vertex \in \feas$, and let $\genTent_{i}$ be a local tent of $\OMG_{i}$ at $\vertex$ for $i = 1, \ldots, s$. If $\genFun$ attains its minimum relative to $\feas$ at $\vertex$, then there exist vectors $\lambda_{i} \in \dualCone{\genTent_{i}}$ for $i = 1, \ldots, s$ and $\psi_{0} \in \Rbb$ satisfying
		\[
			-\psi_{0} \derivative{\genVar}{\genFun (\vertex )} + \lambda_{1} + \cdots + \lambda_{s} = 0
		\]
		such that $\psi_{0} \geqslant 0$, and if $\psi_{0} = 0$, then at least one of the vectors $\lambda_{1}, \ldots, \lambda_{s}$ is not zero.
	\end{theorem}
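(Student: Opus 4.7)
The plan is to exploit the equivalence between \eqref{eq:original problem} and the lifted problem \eqref{eq:transformed problem 2} established in Lemma \ref{lem:Equivalence} and \S\ref{subsec:transformed prob}: any optimal pair $\bigl((\stt{\as}{t}),(\cont{\as}{t})\bigr)$ of the former lifts to an optimal pair $\bigl((\wtt{\as}{t}),(\cont{\as}{t})\bigr)$ of the latter, where the extended state carries the sequence $(\extst{t}{k})_{k=0}^{T-2}$ encoding the rate constraints. The transformed problem sits in the standard form of \cite{ref:VGB-75}, with only state constraints, control constraints, and a smooth dynamics $\mathcal{F}$; I would therefore invoke \cite[Theorem 20]{ref:VGB-75} on the transformed problem to obtain a nontrivial multiplier $\abnormal \ge 0$ together with a costate taking values in $(\Rbb^q)\dual$, and then decompose this costate in accordance with the block structure $\wtt{}{t} = (\stt{}{t},\extst{t}{0},\ldots,\extst{t}{T-2})$. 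This decomposition produces $\etaf(\cdot)$ for the $\stt{}{}$-block and a family $\etag{k}(\cdot)$, one for each $\extst{}{k}$-block, having exactly the dimensions asserted in the statement.

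Next I would expand the lifted Hamiltonian along this decomposition. Because $\mathcal{F}$ has block-diagonal state dependence (the map $\gt{k}$ depends only on $\extst{t}{k}$ and $\cont{}{t}$, while $f$ depends only on $\stt{}{t}$ and $\cont{}{t}$), the lifted Hamiltonian splits as $\inprod{\etaf}{f}+\sum_{k=0}^{T-2}\inprod{\etag{k}}{\gt{k}}-\abnormal c$. Applying the adjoint recursion of the standard PMP to each block separately then yields (i) the system adjoint \ref{rcpmp:adj-p1} for $\etaf$, augmented with the tent multipliers $\etax(t)$ coming from $\stt{}{t}\in\Mbb{t}$, and (ii) a family of block adjoint recursions for each $\etag{k}$. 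For the latter, using the piecewise definition of $\gt{k}$ in \eqref{eq:sys_rate_dyn}, namely $\partial_{\dummyx}\gt{k}=I_m$ at every time except $t=k$ (where $\partial_{\dummyx}\gt{k}=0$), together with tent multipliers $\etay{k}(t)$ coming from the state constraints $\extst{t}{k}\in\ysetin{t}{k}$, produces the recursion $\etag{k}(t-1)=\etag{k}(t)+\etay{k}(t)$ when $t\neq k$ and $\etag{k}(k-1)=\etay{k}(k)$, exactly as stated.

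Then I would read off the Hamiltonian maximization condition \ref{rcpmp:hmc-p1} from the standard local maximization along control tent directions in the lifted problem. The key observation is that $\partial_{\dummyu}\gt{k}$ vanishes except at $t=k$ (where it equals $-I_m$) and at $t=k+1$ (where it equals $+I_m$); hence among the $T-1$ block contributions $\inprod{\etag{k}(t)}{\partial_{\dummyu}\gt{k}(t,\cdot,\cdot)}$ only the two indices $k=t-1$ and $k=t$ survive at time $t$, and they reassemble into the single cross term $\inprod{\etag{t-1}(t)-\etag{t}(t)}{\tilde u(t)}$ appearing in \eqref{eq:rcpmp_Ham-p1}. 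The transversality identities \ref{rcpmp:trv-p1}, including the endpoint relation $\etag{k}(T-1)=\etay{k}(T)$, follow by reading off the free-endpoint transversality for the lifted state and restricting to each block.

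The main obstacle is the careful bookkeeping of adjoint indices for the augmented system: for each $k$ one must correctly identify which costate recursions are driven by the control-dependent branch of $\gt{k}$ and which by the identity branch, and then verify that the surviving multipliers at a given time $t$ collapse into the compact $\etag{t-1}(t)-\etag{t}(t)$ coupling of the original Hamiltonian. Equally delicate is the treatment of the tent cones for the nested state constraint sets $\ysetin{t}{k}$: for $t\in\{0,\dots,k\}$ the singleton $\{0\}$ forces $\etay{k}(t)$ to range freely in $(\Rbb^m)\dual$, for $t=k+1$ the constraint is inherited from $\Ubb{k+1}$, and for $t\in\{k+2,\dots,T\}$ the closed ball of radius $\Rcal_k$ contributes a polar-cone multiplier. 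Keeping these three regimes distinct throughout the index bookkeeping is what produces the precise form of \ref{rcpmp:adj-p1}, \ref{rcpmp:trv-p1}, and \ref{rcpmp:hmc-p1} in the theorem.
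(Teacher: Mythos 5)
Your proposal does not address the statement at hand. The theorem you were asked to prove is the abstract multiplier rule of Boltyanskii (\cite[Theorem 16]{ref:VGB-75}): a purely finite-dimensional statement about a smooth function $\genFun$ minimized over an intersection $\feas=\bigcap_{k}\OMG_k$ of sets admitting local tents $\genTent_i$ at $\vertex$, concluding that suitable dual-cone vectors $\lambda_i\in\dualCone{\genTent_i}$ and a scalar $\psi_0\ge 0$ exist with $-\psi_0\derivative{\genVar}{\genFun(\vertex)}+\lambda_1+\cdots+\lambda_s=0$ and a nontriviality clause. Nothing in this statement involves a control system, a Hamiltonian, costate recursions, or rate constraints. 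What you have written instead is a sketch of the proof of the paper's main result, Theorem \ref{rcpmp-p1}, via the lifting to the transformed problem \eqref{eq:transformed problem 2} and an appeal to the standard PMP — which is indeed how the paper proves \emph{that} theorem in Appendix B, but it is logically downstream of the present statement, not a proof of it. (The paper itself imports Theorem \ref{appen:bolt thrm 16 necessary condition} without proof, as one of the standard tent-theoretic facts on which the PMP of \cite{ref:VGB-75} rests.)

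A correct argument for the actual statement runs along the following lines, using the other results collected in Appendix A. Form the auxiliary set $\OMG_0\Let\{\vertex\}\cup\aset{\genVar\suchthat \genFun(\genVar)<\genFun(\vertex)}$; by Proposition \ref{appen:prop optimality condition}, optimality of $\vertex$ forces $\OMG_0\cap\OMG_1\cap\cdots\cap\OMG_s=\{\vertex\}$. When $\derivative{\genVar}{\genFun(\vertex)}\neq 0$, Theorem \ref{appen:bolt thrm 9 half-space tent } supplies the half-space $\genTent_0=\aset{\genVar\suchthat\inprod{\derivative{\genVar}{\genFun(\vertex)}}{\genVar-\vertex}\le 0}$ as a tent of $\OMG_0$ at $\vertex$, and this cone is not a plane. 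If the family $\aset{\genTent_0,\genTent_1,\ldots,\genTent_s}$ were inseparable, Theorem \ref{appen:bolt thrm 12 separability} would produce a common point of $\OMG_0\cap\OMG_1\cap\cdots\cap\OMG_s$ other than $\vertex$, a contradiction; hence the family is separable, and separability of a family of convex cones with common vertex is equivalent to the existence of vectors $\lambda_i\in\dualCone{\genTent_i}$, $i=0,1,\ldots,s$, not all zero, with $\lambda_0+\lambda_1+\cdots+\lambda_s=0$. Since $\dualCone{\genTent_0}$ is the ray generated by $-\derivative{\genVar}{\genFun(\vertex)}$, one writes $\lambda_0=-\psi_0\derivative{\genVar}{\genFun(\vertex)}$ with $\psi_0\ge 0$, and the nontriviality clause (if $\psi_0=0$ then some $\lambda_i\neq 0$) is exactly the statement that not all of $\lambda_0,\ldots,\lambda_s$ vanish. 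The degenerate case $\derivative{\genVar}{\genFun(\vertex)}=0$ is handled by taking $\psi_0>0$ arbitrary and $\lambda_1=\cdots=\lambda_s=0$. None of this machinery appears in your proposal, so as a proof of the quoted theorem it has a complete gap.
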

	
 \section{Appendix: Proofs}\label{Appen:B}
We provide a detailed proof of Theorem \eqref{rcpmp-p1} and  Corollary \ref{cor:cas} which is the main result of this article. \newline  \\
\noindent
{\it \textbf{Proof of Theorem \eqref{rcpmp-p1}:}} 
Consider the optimal control problem \eqref{eq:transformed problem 2}. Define the Hamiltonian
\begin{align}\label{eq:rcpmp-ham-p2}
    \Rbb \times \Nz \times (\Rbb^{\nu})^{\star} \times \Rbb^d \times \Rbb^{q} & \times\Rbb^m \ni (\psi_0,s,\psi,\dummyx,\dummyw,\dummyu) \mapsto \nonumber \\ &H^{\psi_0}(s,\psi,\dummyx,\dummyw,\dummyu) \Let  \inprod{\psi}{\mathcal{F}\bigl(s,\dummyw,\dummyu\bigr)} - \psi_0 \cost(s,\dummyx,\dummyu) \in \Rbb.
\end{align}
Observe that the problem \eqref{eq:transformed problem 2} is in the standard form as in \cite{ref:VGB-75}. Let \(\Bigl(\bigl(\wtt{\as}{t}\bigr)_{t=0}^T,\bigl(\cont{\as}{t}\bigr)_{t=0}^{T-1}\Bigr)\) be an optimal state-action trajectory of \eqref{eq:transformed problem 2}. Invoking \cite[Theorem 20]{ref:VGB-75}, there exists
 \renewcommand{\theenumi}{\Roman{enumi}}
\begin{enumerate}[leftmargin=*, widest=b, align=left]
    \item  trajectories \(\bigl( \etaF(t) \bigr)_{t=0}^{T-1} \subset \bigl(\Rbb^{q}\bigr)^{\star}\),
    \item sequences \(\bigl( \etaw(t) \bigr)_{t=0}^{T} \subset \bigl(\Rbb^{q}\bigr)^{\star}\) and, 
    \item $\abnormal \in \Rbb$,
\end{enumerate}
satisfying the following conditions:
	\begin{enumerate}[label=\textup{(PMP-\alph*)}, leftmargin=*, widest=b, align=left]
		\item \label{rcpmp:nonneg-p2} non-positivity condition:  \(\abnormal \geqslant 0\);
		\item \label{rcpmp:non-triv-p2} nontriviality: the tuple \( \Bigl( \abnormal,\bigl(\etaF(t)\bigr)_{t=0}^{T-1},\bigl(\etaw(t)\bigr)_{t=0}^{T}\Bigr)\) do not vanish simultaneously;
		\item \label{rcpmp:state-p2}  the system state dynamics:
			\begin{equation*}
				\begin{aligned}
				\wtt{\as}{t+1}&=\frac{\partial }{\partial \psi}H^{\abnormal} \big(t,\etaF(t),\wtt{\as}{t},\cont {\as}{t} \big)\\
				&= \mathcal{F}\bigl(t,\wtt{\as}{t},\cont{\as}{t}\bigr) \quad\text{for all}\,\, t=0,\ldots,T-1,
				\end{aligned}
			\end{equation*}
		\item \label{rcpmp:adj-p2}
		the system adjoint dynamics:
			\begin{equation*}
				\begin{aligned}
				\etaF(t-1)&=\frac{\partial }{\partial \dummyw}H^{\abnormal}\bigl(t,\etaF(t),\wtt{\as}{t},\cont {\as}{t}\big)+\etaw(t) \quad \text{for all}\,\, t=0,\ldots,T,
				\end{aligned}
			\end{equation*}
		\item \label{rcpmp:trv-p2} the transversality conditions:
			\begin{equation*}
			    \begin{aligned}
			    &\frac{\partial}{\partial \dummyw}H^{\abnormal}\big(0,\etaF(0),\wtt{\as}{0},\cont{\as}{0} \big)+\etaw(0)=0, \\
			    &\etaF(-1)=0,\quad\etaw(T)-\etaF(T-1)=0,\\ &H^{\abnormal}\big(T,\etaF(T),\wtt{\as}{T},\cont{\as}{T} \big)=0,
			    \end{aligned}
			\end{equation*}
			along with the boundary conditions: \(y\as_k(0)=0\,\text{for every}\,\, k=0,\ldots,T-2\),  
\begin{align}
    \text{and,\, }\begin{cases}
         y\as_0(T)=\cont{\as}{1}-\cont{\as}{0},\\
         y\as_1(T)=\cont{\as}{2}-\cont{\as}{1},\\ \hspace{3mm}\vdots \\
         y\as_{T-2}(T)=\cont{\as}{T-1}-\cont{\as}{T-2},
    \end{cases}
\end{align}
		  
		\item \label{rcpmp:hmc-p2} the Hamiltonian maximization condition, pointwise in time: for every \(t=0,\ldots,T-1\), we have	
			\[
			\Bigg\langle \frac{\partial}{\partial \dummyu}H^{\abnormal}\big(t,\etaf(t),\etag{0}(t),\ldots,\etag{T-2}(t),\stt{\as}{t},y \as_0(t),\ldots,y \as_{T-2}(t),\cont{\as}{t} \big),\tilde{u}(t)\Bigg \rangle \leqslant 0, 
			\]
			whenever \(\cont{\as}{t}+ \tilde{u}(t) \in U(t)\), where \(U(t)\) is a given local tent of \(\Ubb{t}\) at the point \(\cont{\as}{t}\).
	\end{enumerate}       
Observe that the vectors \(\bigl(\etaF(t)\bigr)_{t=0}^{T}\) denotes the co-state trajectories of the augmented system \eqref{eq:w_t and F_t}, and the individual components in the vector \(\etaF(t)\) denote the co-states  corresponding to the dynamics of \eqref{eq:system} and \eqref{eq:sys_rate_dyn}. Similar arguments hold for the vector \(\etaw(t)\) as well. Precisely, 
\begin{equation}
    \etaF(t)= \begin{pmatrix}
         \etaf(t)\\ \etag{0}(t)\\ \vdots\\ \etag{T-2}(t)
    \end{pmatrix} \quad \text{and} \quad \etaw(t) = \begin{pmatrix}
         \etax(t)\\ \etay{0}(t)\\ \vdots \\ \etay{T-2}(t)
    \end{pmatrix}.
\end{equation}
Then we can write the Hamiltonian as
\begin{multline}
    H^{\abnormal} \bigl(t,\etaF(t),\wtt{}{t},\cont{}{t} \bigr)=H^{\abnormal}\bigl(t,\etaf(t),\etag{0}(t),\ldots,\etag{T-2}(t),\stt{}{t},\extst{t}{0},\ldots,\extst{t}{T-2},\cont{}{t} \bigr) \nn \\= \inprod{\etaf(t)}{\field \bigl(t,\stt{}{t},\cont{}{t}\bigr)} \nn + \sum_{k=0}^{T-2}\inprod{\etag{k}(t)}{g_k\bigl(t,\extst{t}{k},\cont{}{t}\bigr)} - \abnormal \cost \bigl(t,\stt{}{t},\cont{}{t}\bigr) .
\end{multline}
Define \(H_{k}^{\abnormal}\bigl(t,\etaF(t),\wtt{\as}{t},\cont {\as}{t} \bigr) \Let \inprod{\etag{k}(t)}{g_{k}\bigl(t,y^{\ast}_k(t),\cont{\as}{t}\bigr)}\) for \(t =\timestamp{0}{T-1}\), and \(k = 0,\ldots,T-2\). Then we have 
\begin{equation}
   \wtt{\as}{t+1}=\begin{pmatrix}
     \stt{\as}{t+1}\\y_0^{\ast}(t+1)\\\vdots\\ y_{T-2}^{\ast}(t+1)
\end{pmatrix}=\begin{pmatrix}
     \frac{\partial}{\partial \psi_{\st}}H^{\abnormal}\bigl(t,\etaF(t),\wtt{\as}{t},\cont{\as}{t}\bigr) \\ \frac{\partial}{\partial \psi_{\rst}}H_0^{\abnormal}\bigl(t,\etaF(t),\wtt{\as}{t},\cont{\as}{t}\bigr) \\ \vdots \\ \frac{\partial}{\partial \psi_{\rst}}H_{T-2}^{\abnormal}\bigl(t,\etaF(t),\wtt{\as}{t},\cont{\as}{t}\bigr)
\end{pmatrix}, 
\end{equation}
giving us the chain of adjoint variables
  \begin{align}
    \etaf(t-1)&=\frac{\partial}{\partial \dummyw_{\st}}H^{\abnormal}\bigl(t,\etaF(t),\wtt{\as}{t},\cont{\as}{t}\bigr)+\etax(t), \nn \\ \etag{0}(t-1)&=\frac{\partial}{\partial \dummyw_{\rst}}H^{\abnormal}_0\bigl(t,\etaF(t),\wtt{\as}{t},\cont{\as}{t}\bigr)+\etay{0}(t), \nn \\ & \hspace{1.5mm}\vdots \nn\\ \etag{T-2}(t-1)&= \frac{\partial}{\partial \dummyw_{\rst}}H_{T-2}^{\abnormal}\bigl(t,\etaF(t),\wtt{\as}{t},\cont{\as}{t}\bigr)+\etay{T-2}(t).
\end{align}
We can simplify the adjoint dynamics and write
\begin{align}
\label{eq:eta_g_t-1_k}
    \etag{k}(t-1)=\frac{\partial}{\partial \dummyw_{\rst}}H_{k}^{\abnormal}\bigl(t,\etaF(t),\wtt{\as}{t},\cont{\as}{t}\bigr)+\etay{k}(t), 
\end{align}
where \(t=0,1,\ldots,T\). In general, for all \(k=0,1,\ldots,T-2\), \eqref{eq:eta_g_t-1_k} can be written as  \( \etag{k}(T-1)=\etay{k}(T)\) and 
\begin{equation}
\label{eq:eta_g_seq}
        \begin{aligned}
            \etag{k}(t-1) =\begin{cases}
            \etay{k}(t)~&~\text{for}~t=k,\\
            \etag{k}(t)+\etay{k}(t)~&~\text{for all} ~t\neq k.
            \end{cases}
        \end{aligned}
    \end{equation}
Let us show for one instance, how to arrive at \eqref{eq:eta_g_seq}. Fix \(k=0\), let \(t=0,\ldots,T\), then from \eqref{eq:eta_g_t-1_k} and \eqref{eq:sys_rate_dyn} we get
\begin{equation}
    \begin{alignedat}{2}
        \etag{0}(-1)&=\frac{\partial}{\partial \dummyw_{\rst}} \inprod{\etag{0}(0)}{-u\as(0)}+\etay{0}(0)=\etay{0}(0) \quad&\text{for \(t=0,\)}\nn \\\etag{0}(0)&=\frac{\partial}{\partial \dummyw_{\rst}} \inprod{\etag{0}(1)}{y_0^{\ast}(1)+\cont{\as}{1}}+\etay{0}(1)=\etag{0}(1)+\etay{0}(1) \quad&\text{for \(t=1,\)} \nn \\ \etag{0}(1)&=\frac{\partial}{\partial \dummyw_{\rst}} \inprod{\etag{0}(2)}{y_0^{\ast}(2)}+\etay{0}(2)=\etag{0}(2)+\etay{0}(2) \quad&\text{for \(t=2,\)} \nn \\&\hspace{1.5mm}\vdots \nn \\ \etag{0}(T-1)&=\frac{\partial}{\partial \dummyw_{\rst}} \inprod{\etag{0}(T)}{y_0^{\ast}(T)}+\etay{0}(T)=\etay{0}(T) \quad&\text{for \(t=T,\)}
    \end{alignedat}
\end{equation}
where the last condition at \(t=T\) is a specific instance of transversality condition \eqref{rcpmp:trv-p2}. Thus, going through this recursive procedure, we can see that \eqref{eq:eta_g_seq} holds. From the Hamiltonian maximization condition for every \(t=\timestamp{0   }{T-1} \),		
			\[
			\Bigg\langle \frac{\partial}{\partial \dummyu}H^{\abnormal}\big(t,\etaf(t),\etag{0}(t),\ldots,\etag{T-2}(t),\stt{\as}{t},y \as_0(t),\ldots,y \as_{T-2}(t),\cont{\as}{t}\big),\tilde{u}(t) \Bigg \rangle \leqslant 0, 
			\]
			whenever \(\cont{\as}{t}+ \tilde{u}(t) \in U(t)\), where \(U(t)\) is a given local tent of \(\Ubb{t}\) at the point \(\cont{\as}{t}\).
Observe that for all \(t=\timestamp{0}{T-1}\),
\begin{alignat}{2}
\label{rcpmp:hamilt_eq}
&\Bigg\langle \frac{\partial}{\partial\dummyu}H^{\abnormal}\bigl(t,\etaF(t),\wtt{\as}{t},\cont{\as}{t} \bigr),\tilde{u}(t) \bigg \rangle \leqslant 0 \nn \\ 
\iff& \bigg \langle \frac{\partial}{\partial\dummyu}\bigg{(} \inprod{\etaf(t)}{\field\bigl(t,\stt{\as}{t},\cont{\as}{t}\bigr)} +\sum_{k=0}^{T-2}\inprod{\etag{k}(t)}{g_{k}\bigl(t,y \as_k(t),\cont{\as}{t}\bigr)} \\& \hspace{20mm} -\abnormal \cost\bigl(t,\stt{\as}{t},\cont{\as}{t}\bigr)\bigg{)}   ,\tilde{u}(t)\bigg\rangle \leqslant 0 \nn \\
\iff& \Bigg\langle \frac{\partial}{\partial\dummyu} \biggl(  \inprod{\etaf(t)}{\field\bigl(t,\stt{\as}{t},\cont{\as}{t}\bigr)} + \inprod{\etag{t-1}(t)-\etag{t}(t)}{\cont{\as}{t}} \\& \hspace{20mm} - \abnormal \cost\bigl(t,\stt{\as}{t},\cont{\as}{t}\biggr), \tilde{u}(t)\bigr) \bigg\rangle \leqslant 0 , 
\end{alignat}
 whenever \(\cont{\as}{t}+\tilde{u}(t) \in U(t)\), where \(U(t)\) is a given local tent of \(\Ubb{t}\) at \(\cont{\as}{t}\).
 If we define the Hamiltonian for the rate constrained optimal control problem \eqref{eq:original problem} by 
 \begin{equation}
 \label{eq:HHH}
    \begin{aligned}
         H\bigl(t, \etaf(t),&\etag{0}(t),\etag{1}(t),\ldots,\etag{T-2}(t),\stt{\as}{t},\cont{\as}{t} \bigr) \Let  \inprod{\etaf(t)}{\field\bigl(t,\stt{\as}{t},\cont{\as}{t}\bigr)}\\&+ \inprod{\etag{t-1}(t)-\etag{t}(t)}{\cont{\as}{t}}-\abnormal \cost\bigl(t,\stt{\as}{t},\cont{\as}{t}\bigr),
     \end{aligned}
 \end{equation}
then we have the following conclusions:
\begin{enumerate}[leftmargin=*, widest=b, align=right]
    \item[(i)] The non-positivity and  non-triviality conditions \eqref{rcpmp:nonneg-p1}-\eqref{rcpmp:non-triv-p1} can be obtained from \eqref{rcpmp:nonneg-p2}-\eqref{rcpmp:non-triv-p2}.
    \item[(ii)] Defining the Hamiltonian as in \eqref{eq:HHH}, the system dynamics \eqref{rcpmp:state-p2} and the adjoint dynamics \eqref{rcpmp:adj-p2} are seen to be identical to the one in \eqref{rcpmp:state-p1}.
    \item[(iii)] The transversality condition \eqref{rcpmp:trv-p1} can be obtained from the Hamiltonian \eqref{eq:HHH}, \eqref{rcpmp:trv-p2} and from the definition of the trajectory \(\bigl(\etag{k}(t)\bigr)_{t=0}^{T-1}\) and the sequence \(\bigl(\etay{k}(t)\bigr)_{t=0}^{T}\).
    \item[(iv)]We observe that from \eqref{rcpmp:hamilt_eq} the Hamiltonian maximization condition in \eqref{rcpmp:hmc-p2} is equivalent to the Hamiltonian maximization condition \eqref{rcpmp:hmc-p1}.
\end{enumerate}
Hence, we can deduce the conditions \eqref{rcpmp:nonneg-p1}-\eqref{rcpmp:hmc-p1} of Theorem \ref{rcpmp-p1} from the conditions \eqref{rcpmp:nonneg-p2}-\eqref{rcpmp:hmc-p2}. This completes the proof. \hfill \(\qed\)

{\it \textbf{Proof of Corollary \eqref{cor:cas}}}: The conditions \eqref{rcpmp:nonneg-con-aff}--\eqref{rcpmp:trv-con-aff} follows from the conditions \eqref{rcpmp:nonneg-p1}--\eqref{rcpmp:trv-p1} in Theorem \ref{rcpmp-p1}. It remains to show that \eqref{rcpmp:hmc-con-aff} holds: note that \(H^{\abnormal}\bigl(\etaf(t),\etag{0}(t),\ldots,\etag{T-2}(t),\stt{\as}{t},\cdot\bigr): \Ubb{t} \lra \Rbb\) is concave. Indeed, from convexity of \(\cost(\dummyx,\cdot)\) it follows that  
    \begin{align}
       &\left( \frac{\partial^2}{\partial\dummyu^i \partial\dummyu^j}H^{\abnormal}\bigl(\etaf(t),\etag{0}(t),\ldots,\etag{T-2}(t),\stt{\as}{t},\cont{\as}{t}\bigr)\right)_{i,j} \nn\\&= -\left(\frac{\partial^2}{\partial\dummyu^i \partial\dummyu^j}\cost(\stt{\as}{t},\cont{\as}{t}) \right)_{i,j}  \preccurlyeq 0. \nn
    \end{align}
Since ,\(\mathbb{U}\) is compact, invoking Weierstrass Theorem \cite[Theorem 2.2]{ref:OG10}, the maximization condition \eqref{rcpmp:hmc-con-aff} now follows immediately. \hfill \(\qed\)

\bibliographystyle{amsalpha}
\bibliography{refs}

\end{document}